\numberwithin{equation}{section}
\numberwithin{figure}{section}
\theoremstyle{plain}
\newtheorem{main theorem}{Main Theorem}
\newtheorem{theorem}{Theorem}[section]
\newtheorem{lemma}[theorem]{Lemma}
\newtheorem{claim}[theorem]{Claim}
\theoremstyle{definition}
\newtheorem{remark}[theorem]{Remark}
\newtheorem{problem}[theorem]{Problem}
\newcommand{\diam}{\mathrm{diam}}
\newcommand{\supp}{\mathrm{supp}}
\newcommand{\norm}[1]{\left|\!\left|#1\right|\!\right|}
\begin{document}

\title[A Lipschitz refinement of the Bebutov--Kakutani theorem]{A Lipschitz refinement of the Bebutov--Kakutani dynamical embedding theorem}

\author{Yonatan Gutman, Lei Jin,  Masaki Tsukamoto}

\subjclass[2010]{37B05, 54H20}

\keywords{Embedding of a flow, Lipschitz function}

\maketitle

\begin{abstract}
We prove that an $\mathbb{R}$-action on a compact metric space
embeds equivariantly in the space of one-Lipschitz functions $\mathbb{R}\to[0,1]$
if its fixed point set can be topologically embedded in the unit interval.
This is a refinement of the classical Bebutov--Kakutani theorem (1968).
\end{abstract}

\section{Introduction}

The purpose of this short paper is to refine a classical theorem of
Bebutov \cite{Bebutov} and Kakutani \cite{Kakutani} on dynamical systems.
We call $(X,T)$ a \textbf{flow} if $X$ is a compact metric space and
\[ T:\mathbb{R}\times X\to X, \quad (t, x) \mapsto T_t x \]
is a continuous action of $\mathbb{R}$.
We define $\mathrm{Fix}(X,T)$ (sometimes abbreviated to $\mathrm{Fix}(X)$) as the set of $x\in X$ satisfying $T_t x=x$ for all $t\in \mathbb{R}$.
We define $C(\mathbb{R})$ as the space of continuous maps $\varphi: \mathbb{R}\to [0,1]$.
It is endowed with the topology of uniform convergence over compact subsets of $\mathbb{R}$, namely the topology given by the
distance
\begin{equation} \label{eq: distance on C(R)}
      \sum_{n=1}^\infty 2^{-n} \max_{|t|\leq n} |\varphi(t)-\psi(t)|, \quad (\varphi, \psi\in C(\mathbb{R})).
\end{equation}
The group $\mathbb{R}$ continuously acts on it by the translation:
\begin{equation} \label{eq: translation action}
 \mathbb{R}\times C(\mathbb{R}) \to C(\mathbb{R}), \quad (s, \varphi(t))\mapsto \varphi(t+s).
\end{equation}
A continuous map $f:X\to C(\mathbb{R})$ is called an \textbf{embedding of a flow} $(X,T)$ if $f$ is an $\mathbb{R}$-equivariant topological embedding.
Bebutov \cite{Bebutov} and Kakutani \cite{Kakutani} found that the $\mathbb{R}$-action on $C(\mathbb{R})$ has the
following remarkable ``universality'':

\begin{theorem}[Bebutov--Kakutani]
A flow $(X,T)$ can be equivariantly embedded in $C(\mathbb{R})$ if and only if $\mathrm{Fix}(X,T)$ can be topologically embedded in
the unit interval $[0,1]$.
\end{theorem}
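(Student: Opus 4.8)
\emph{Overall plan.} The plan is to prove the two implications separately; the forward one is immediate and essentially all the work lies in the converse, which I would reduce to the construction of a single continuous ``observable'' $F:X\to[0,1]$ whose translates along orbits separate points. For necessity: if $f:X\to C(\mathbb{R})$ is an equivariant embedding and $x\in\mathrm{Fix}(X,T)$, then $f(x)=f(T_sx)$ is fixed by every translation, so $f(x)$ is a constant function; the constant functions form a closed $\mathbb{R}$-invariant subset of $C(\mathbb{R})$ carried homeomorphically onto $[0,1]$ by $\varphi\mapsto\varphi(0)$, so $f$ restricts to a topological embedding of $\mathrm{Fix}(X,T)$ into $[0,1]$. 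For the converse, I would first record the reduction: every continuous $F:X\to[0,1]$ yields an equivariant continuous map $f_F:X\to C(\mathbb{R})$, $f_F(x)(t)=F(T_tx)$ (continuity coming from uniform continuity of $(t,x)\mapsto F(T_tx)$ on compacta), every equivariant continuous map into $C(\mathbb R)$ arises this way, and $-$ since $X$ is compact and $C(\mathbb{R})$ metrizable $-$ $f_F$ is automatically a topological embedding once it is injective. Hence it suffices to produce a continuous $F:X\to[0,1]$ with the \emph{orbit-separation property}: for all $x\neq y$ there is $t\in\mathbb{R}$ with $F(T_tx)\neq F(T_ty)$.

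\emph{Construction of $F$.} Fix a metric $d$ on $X$. Using the hypothesis I would take a continuous injection $\iota:\mathrm{Fix}(X,T)\to[0,1]$, rescale so that $\iota(\mathrm{Fix}(X,T))\subseteq[1/3,2/3]$, and extend it by the Tietze theorem to a continuous $F_0:X\to[1/3,2/3]$. Distinct fixed points already receive distinct constant $F_0$-profiles, and this persists under any modification of $F_0$ supported off $\mathrm{Fix}(X,T)$; so what remains is to separate a fixed point from a non-fixed point, and two points lying on non-trivial (possibly equal) orbits. At each non-rest point the flow admits a local cross-section: a compact transversal $S$ and $\varepsilon>0$ for which $(t,s)\mapsto T_ts$ is a homeomorphism of $[-\varepsilon,\varepsilon]\times S$ onto a neighbourhood. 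Exhausting $X\setminus\mathrm{Fix}(X,T)$ by the compacta $K_n=\{x:d(x,\mathrm{Fix}(X,T))\geq 1/n\}$ and covering each $K_n$ by finitely many such ``flow boxes'' (shrunk so as to stay a definite distance from $\mathrm{Fix}(X,T)$) gives countably many boxes $B_1,B_2,\dots$ covering $X\setminus\mathrm{Fix}(X,T)$. Fixing also a countable point-separating family $g_1,g_2,\dots:X\to[0,1]$, I would define in each $B_m$ (with box coordinates $(t,s)$) a correction $h_m:X\to[0,1]$ supported in $B_m$ that oscillates in the $t$-direction with an amplitude built from the relevant $g_k(s)$, and set $F=F_0+\sum_m c_m h_m$ with $c_m\downarrow 0$ so fast that $F$ stays in $[0,1]$ and that, along any orbit, the oscillation of the first active term dominates the tail and cannot be imitated by $F_0$. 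Then the $F$-profile of a non-fixed point is non-constant and, via the boxes its orbit visits, encodes enough of $(g_k)_k$ to recover the point, which yields orbit separation.

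\emph{The main obstacle.} The genuine difficulty, I expect, is uniformity near $\mathrm{Fix}(X,T)$: as $d(x,\mathrm{Fix}(X,T))\to 0$ the admissible cross-section sizes and the local flow speed degenerate, forcing the corrections that matter there onto ever finer scales. One must therefore order the countably many separation demands (one for each pair from a countable dense subset of $X$, together with the fixed-versus-non-fixed demand) and service each with a correction small enough not to undo the earlier ones, while keeping $\sum_m c_m h_m$ uniformly convergent so that $F$ remains continuous and still restricts to $\iota$ on $\mathrm{Fix}(X,T)$. Carrying out this bookkeeping carefully is the heart of Kakutani's argument; the rest of the proof is soft topology.
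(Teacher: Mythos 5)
Your necessity argument and your reduction are both correct: every equivariant continuous $f:X\to C(\mathbb{R})$ is of the form $f_F(x)(t)=F(T_tx)$ with $F(x)=f(x)(0)$, a map into a Hausdorff space from a compactum is an embedding once injective, and injectivity of $f_F$ is exactly the orbit-separation property of $F$ plus injectivity of $F|_{\mathrm{Fix}}$. So the theorem does reduce to producing one continuous observable $F:X\to[0,1]$ extending the given $\iota$ on $\mathrm{Fix}(X,T)$ and separating orbits. That framing is fine and is implicitly used in all known proofs.

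Where the proposal falls short is exactly where you flag it: the ``bookkeeping'' of infinitely many flow-box corrections is not an afterthought but the entire mathematical content, and as stated your scheme has a genuine gap. You assert that ``the oscillation of the first active term dominates the tail and cannot be imitated by $F_0$'', but flow boxes must overlap, a single orbit can re-enter the same box infinitely often, the box sizes and orbit speeds degenerate near $\mathrm{Fix}$, and two distinct points $x,y$ may pass through exactly the same sequence of boxes with the same box-coordinates except for a time shift. Controlling all of these interactions simultaneously with one fixed sequence $c_m\downarrow 0$ is not obviously possible, and you give no mechanism for it; in particular nothing in the sketch rules out that the correction meant to separate one pair $(x,y)$ is cancelled along the orbit of another pair by a later, smaller correction. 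You would also need to guarantee that the corrections, which individually depend only on the box-coordinate $s\in S_m$ through the separating family $g_k$, actually distinguish $x$ from $y$ when $x$ and $y$ lie in different boxes or hit a box at incompatible times -- this requires an explicit argument about time shifts (the $\varepsilon$ in the paper's Lemma~\ref{key lemma}), which is absent.

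For comparison: the present paper does not reprove the Bebutov--Kakutani theorem (it cites Bebutov, Kakutani, and Auslander and notes that those proofs rest on ``constructing large derivative''), but the technique it uses for the Lipschitz refinement is the natural way to make your outline rigorous, and it sidesteps the obstacle entirely. Rather than build $F$ by an infinite sum whose terms must be shown not to interfere, one works in the complete metric space $C_{T,h}(X,C(\mathbb{R}))$ of equivariant maps agreeing with $h$ on the fixed set, and shows that for each closed set $A$ avoiding $\mathrm{Fix}$ the set $G(A)$ of maps with $f(A)\cap F_L=\emptyset$, and for each pair of disjoint closed sets $B,C$ the set $G(B,C)$ of maps with $f(B)\cap f(C)=\emptyset$, is open and dense; Baire then yields an embedding. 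Density is a \emph{single, local} perturbation using one local section and a partition of unity (your Lemmas~\ref{lemma: avoiding constant functions} and \ref{key lemma} in the Lipschitz setting), so one never has to worry about how countably many perturbations interact -- the category theorem does the bookkeeping for free. I would recommend recasting your construction in that residual-set form; as written, the step you defer to ``carrying out the bookkeeping carefully'' is precisely the step that is missing.
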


The ``only if'' part is trivial because the set of fixed points of $C(\mathbb{R})$ is homeomorphic to $[0,1]$.
So the main statement is the ``if'' part.

Although the Bebutov--Kakutani theorem is clearly a nice theorem, it has one drawback:
The space $C(\mathbb{R})$ is not compact (nor locally compact).
So it is not a ``flow'' in the above definition.
This poses the following problem:

\begin{problem}  \label{problem: main problem}
   Is there a \textit{compact} invariant subset of $C(\mathbb{R})$ satisfying the same universality?
\end{problem}

The purpose of this paper is to solve this problem affirmatively.
Let $L(\mathbb{R})$ be the set of maps $\varphi:\mathbb{R}\to [0,1]$ satisfying the one-Lipschitz condition:
\begin{equation}  \label{eq: Lipschitz}
  \forall s, t\in \mathbb{R}: \quad |\varphi(s)-\varphi(t)| \leq |s-t|.
\end{equation}
$L(\mathbb{R})$ is a subset of $C(\mathbb{R})$.
It is compact with respect to the distance (\ref{eq: distance on C(R)}) by Ascoli--Arzela's theorem.
The $\mathbb{R}$-action (\ref{eq: translation action}) preserves $L(\mathbb{R})$.
So it becomes a flow.
Our main result is the following.
This solves \cite[Question 4.1]{GJ}.

\begin{theorem}   \label{theorem: Lipschitz Bebutov--Kakutani}
A flow $(X,T)$ can be equivariantly embedded in $L(\mathbb{R})$ if and only if $\mathrm{Fix}(X,T)$ can be topologically embedded
in the unit interval $[0,1]$.
\end{theorem}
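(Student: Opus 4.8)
\emph{The plan} is to derive the theorem from the classical Bebutov--Kakutani theorem by smoothing the embedding along orbits with an exponential moving average. The ``only if'' direction I would dispose of immediately: an equivariant embedding $X\to L(\mathbb{R})$ is in particular an equivariant embedding $X\to C(\mathbb{R})$, so $\mathrm{Fix}(X,T)$ embeds in $[0,1]$ by the ``only if'' part of the classical theorem. (Equivalently, the fixed points of the flow $L(\mathbb{R})$ are precisely the constant functions, a copy of $[0,1]$, and an embedding of flows restricts to a topological embedding of fixed-point sets.)

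For the ``if'' direction, assume $\mathrm{Fix}(X,T)$ embeds in $[0,1]$. First I would record the standard observation that every continuous $\mathbb{R}$-equivariant map $h\colon X\to C(\mathbb{R})$ has the form $h(x)(t)=G(T_t x)$, where $G(x):=h(x)(0)\colon X\to[0,1]$ is continuous, and that $h$ is injective exactly when $G(T_t x)=G(T_t y)$ for all $t\in\mathbb{R}$ implies $x=y$. Applying the classical Bebutov--Kakutani theorem thus yields a continuous $G\colon X\to[0,1]$ with this orbit-separation property. Then I would set
\[
   \widetilde G(x):=\int_0^\infty e^{-s}\,G(T_s x)\,ds \in [0,1],
\]
and $f(x)(t):=\widetilde G(T_t x)$. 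Equivariance of $f$ is immediate, and continuity of $\widetilde G$, hence of $f\colon X\to C(\mathbb{R})$, follows from a routine estimate using the uniform continuity of $(t,x)\mapsto T_t x$ on compact sets $[-N,N]\times X$.

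The crux is the identity $f(x)(t)=e^{t}\int_t^\infty e^{-u}G(T_u x)\,du$, obtained by substituting $u=t+s$; its integrand is continuous. Differentiating shows that $f(x)$ is $C^1$ with $\frac{d}{dt}f(x)(t)=f(x)(t)-G(T_t x)$, and since both terms on the right lie in $[0,1]$ the derivative has absolute value at most $1$, so $f(x)\in L(\mathbb{R})$. The same identity rearranges to $G(T_t x)=f(x)(t)-\frac{d}{dt}f(x)(t)$, so the orbit $t\mapsto G(T_t x)$ can be read off from $f(x)$; hence $f(x)=f(y)$ forces $G(T_t x)=G(T_t y)$ for all $t$, and therefore $x=y$. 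As $X$ is compact and $L(\mathbb{R})$ is Hausdorff, the continuous equivariant injection $f\colon X\to L(\mathbb{R})$ is automatically a topological embedding, which finishes the proof.

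I do not expect a genuinely hard step. The heart of the matter is simply the observation that the exponential moving average, taken along each orbit, is \emph{invertible} --- it is inverted by $\mathrm{id}-\frac{d}{dt}$ --- which is exactly what allows one to enforce the one-Lipschitz bound while keeping injectivity intact. The only places calling for a little care are the two continuity checks, and both reduce to uniform continuity of the $\mathbb{R}$-action on bounded time intervals.
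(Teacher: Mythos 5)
Your proof is correct and takes a genuinely different, and much shorter, route than the paper's. The paper rebuilds the Bebutov--Kakutani argument from scratch inside $L(\mathbb{R})$: it works in the space $C_{T,h}\left(X,L(\mathbb{R})\right)$ of equivariant maps with prescribed values on fixed points, uses Auslander's local sections, and runs a Baire-category argument whose dense open sets are supplied by two perturbation lemmas (Lemmas~\ref{lemma: avoiding constant functions} and~\ref{key lemma}); keeping the one-Lipschitz constraint under perturbation is the technical heart, and it in turn relies on Lemmas~\ref{lemma: linear algebra 1} and~\ref{lemma: linear algebra 2}. You instead invoke the classical theorem as a black box, write the resulting embedding as $h(x)(t)=G(T_t x)$ with $G=h(\cdot)(0)$, and post-compose with the orbit-wise exponential moving average $\widetilde G(x)=\int_0^\infty e^{-s}G(T_s x)\,ds$. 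The decisive computation --- that $f(x)(t)=\widetilde G(T_t x)$ is $C^1$ with $f(x)'(t)=f(x)(t)-G(T_t x)$, so the smoothing is inverted by $\mathrm{id}-\frac{d}{dt}$ --- simultaneously forces $|f(x)'|\leq 1$ (both terms lie in $[0,1]$) and preserves injectivity, and the remaining checks (equivariance, continuity via uniform continuity of the action on $[-N,N]\times X$, compactness giving an embedding) are routine as you say. What the paper's longer route buys is that it is self-contained rather than a reduction to the classical theorem, and, as the closing remark of the paper indicates, it is designed to adapt to other target function spaces where no such invertible smoothing is available. What your route buys is brevity and transparency: a one-sided exponential kernel carries $C(\mathbb{R})$ into $L(\mathbb{R})$ by an injective equivariant map, which is exactly the bridge the problem is asking for.
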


As in the case of the Bebutov--Kakutani theorem, the ``only if'' part is trivial because
the fixed point set
$\mathrm{Fix}(L(\mathbb{R}))$ is homeomorphic to $[0,1]$.
Since $L(\mathbb{R})$ is compact,
it is a more reasonable choice of such a ``universal flow''.

The proof of Theorem \ref{theorem: Lipschitz Bebutov--Kakutani} is based on the techniques
originally used in the proof of the Bebutov--Kakutani theorem (in particular, the idea of \textit{local section}).
A main new ingredient is the topological argument given in Section \ref{section: topological preparations}, which
has some combinatorial flavor.

\begin{remark}
Problem \ref{problem: main problem} asks us to find a universal flow \textit{smaller than} $C(\mathbb{R})$.
If we look for a universal flow \textit{larger than} $C(\mathbb{R})$, then it is much easier to find an example.
Let $L^\infty(\mathbb{R})$ be the set of $L^\infty$-functions $\varphi: \mathbb{R}\to [0,1]$. (We identify two functions
which are equal to each other almost everywhere.)
We consider the weak$^*$ topology on it. Namely a sequence $\{\varphi_n\}$ in $L^\infty(\mathbb{R})$ converges to
$\varphi\in L^\infty(\mathbb{R})$ if for every $L^1$-function $\psi:\mathbb{R}\to \mathbb{R}$
\[  \lim_{n\to \infty} \int_{\mathbb{R}} \varphi_n(t) \psi(t) \, dt  = \int_{\mathbb{R}} \varphi(t) \psi(t) \, dt. \]
Then $L^\infty(\mathbb{R})$ is compact and metrizable by Banach--Alaoglu's theorem and the separability of the space of $L^1$-functions,
respectively.
The group $\mathbb{R}$ acts continuously on it by translation.
So it becomes a flow.
Note that $\mathrm{Fix}\left(L^\infty(\mathbb{R})\right)$ is homeomorphic to $[0,1]$ and that
the natural inclusion map $C(\mathbb{R}) \subset L^\infty(\mathbb{R})$ is an equivariant continuous injection.
Then the Bebutov--Kakutani theorem implies the universality of $L^\infty(\mathbb{R})$:
A flow $(X,T)$ can be equivariantly embedded in $L^\infty(\mathbb{R})$ if and only if $\mathrm{Fix}(X,T)$ can be
topologically embedded in $[0,1]$.
\end{remark}

\vspace{0.3cm}

\textbf{Acknowledgement.}
This paper was written when the third named author stayed in the Einstein Institute of Mathematics in the Hebrew University of
Jerusalem.
He would like to thank the institute for its hospitality.
Y.G. was partially supported by the Marie Curie grant PCIG12-GA-2012-334564.
Y.G. and L.J. were partially supported by the National Science Center (Poland) grant 2013/08/A/ST1/00275.
M.T. was supported by John Mung Program of Kyoto University.

\medskip

\section{Topological preparations}  \label{section: topological preparations}

Let $a$ be a positive number.
We define $L[0,a]$ as the space of maps $\varphi:[0,a]\to [0,1]$ satisfying
\[  \forall s, t\in [0,a]: \quad |\varphi(s)-\varphi(t)| \leq |s-t|.  \]
$L[0,a]$ is endowed with the distance
$\norm{\varphi-\psi}_\infty = \max_{0 \leq t\leq a} |\varphi(t)-\psi(t)|$.
We define $F_L[0, a] \subset L[0,a]$ as the space of constant functions $\varphi:[0,a]\to [0,1]$, which is homeomorphic
to $[0,1]$.

Let $(X,d)$ be a compact metric space.
We define $C\left(X,L[0,a]\right)$ as the space of continuous maps $f:X\to L[0,a]$, which is
endowed with the distance
\[ \max_{x\in X} \norm{f(x)-g(x)}_\infty. \]

\begin{lemma} \label{lemma: avoiding constant functions}
Let $f\in C\left(X, L[0,a]\right)$ and suppose there exists $0<\tau<1$ satisfying
\begin{equation} \label{eq: tau-Lipschitz}
    \forall x\in X, \forall s, t\in [0,a]: \quad |f(x)(s)-f(x)(t)| \leq \tau |s-t|.
\end{equation}
Then for any $\delta>0$ there exists $g\in C\left(X, L[0,a]\right)$ satisfying
   \begin{enumerate}
      \item $\max_{x\in X} \norm{f(x)-g(x)}_\infty <\delta$.
      \item $g(x)(0)=f(x)(0)$ and $g(x)(a)=f(x)(a)$ for all $x\in X$.
      \item $g(X)\cap F_L[0,a] = \emptyset$.
   \end{enumerate}
\end{lemma}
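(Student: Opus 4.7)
The plan is to set $g(x)(t) = \max\bigl(0, \min(1, f(x)(t) + \psi(t))\bigr)$ for a single fixed function $\psi$ chosen from the compact convex set
\[
\tilde L = \{\psi \in C[0,a] : \psi(0) = \psi(a) = 0,\ \norm{\psi}_\infty \leq \delta/2,\ \mathrm{Lip}(\psi) \leq 1 - \tau\}.
\]
Clamping to $[0,1]$ is $1$-Lipschitz and agrees with the identity on $[0,1]$, so any such $\psi$ produces $g \in C(X, L[0,a])$ satisfying conditions (1) and (2): the endpoints are preserved because $\psi(0) = \psi(a) = 0$ and $f(x)(0), f(x)(a) \in [0,1]$, and $g(x) \in L[0,a]$ since $\mathrm{Lip}(f(x) + \psi) \leq \tau + (1-\tau) = 1$. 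Note that $\tilde L$ is infinite-dimensional (it contains a copy of the Hilbert cube) and compact by Ascoli--Arzela.

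The only remaining task is to choose $\psi \in \tilde L$ so that no $g(x)$ is a constant. A case analysis on the value $c = g(x)(0) = f(x)(0)$ shows that $g(x) \equiv c$ forces $f(x)(0) = f(x)(a) = c$ together with one of three rigid conditions: (i) $c \in (0,1)$ and $\psi(t) = c - f(x)(t)$ on all of $[0,a]$; (ii) $c = 0$ and $\psi(t) \leq -f(x)(t)$ for every $t$; (iii) $c = 1$ and $\psi(t) \geq 1 - f(x)(t)$ for every $t$. Requiring $\psi$ to take a strictly positive value at some point of $(0,a)$ and a strictly negative value at another immediately rules out (ii) and (iii); the set of such $\psi$ is open and nonempty in $\tilde L$.

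Case (i) leaves a compact ``bad set''
\[
\mathcal B = \bigl\{ f(x)(0) - f(x) : x \in X,\ f(x)(0) = f(x)(a) \bigr\} \cap \tilde L \;\subset\; \tilde L,
\]
a continuous image into $\tilde L$ of a closed subset of $X$ (the intersection with $\tilde L$ is the restrictive constraint when $\tau > 1/2$, where only those $x$ with $\mathrm{Lip}(f(x)) \leq 1-\tau$ contribute). I would then select $\psi \in \tilde L \setminus \mathcal B$ that additionally takes both strictly positive and strictly negative values on $(0,a)$. The main obstacle, and what I expect is the combinatorial heart of Section \ref{section: topological preparations}, is to show such a $\psi$ always exists: the natural intuition is that $\mathcal B$ is merely compact while $\tilde L$ is infinite-dimensional, but turning this into a rigorous argument when $X$ may itself be infinite-dimensional requires careful control over how the image of $f$ sits inside $\tilde L$, specifically ruling out the pathology that $\mathcal B$ might fill a relative open subset of $\tilde L$.
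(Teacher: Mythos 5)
Your approach of adding one fixed perturbation $\psi$ to every $f(x)$ is genuinely different from the paper's, but it has a fatal flaw, not merely an acknowledged gap: the bad set $\mathcal B$ can be \emph{all} of $\tilde L$, so that no admissible $\psi$ exists at all.

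Concretely, take any $\tau \geq 1/2$ (which is the relevant regime: in the paper's application of this lemma, $\tau = 1-\delta/2$ is close to $1$) and suppose WLOG $\delta \leq 1$. Let $X = \tilde L$ itself, which is a compact metric space, and define $f : X \to L[0,a]$ by $f(\psi)(t) = \tfrac12 - \psi(t)$. Then $f(\psi)$ takes values in $[\tfrac12 - \delta/2, \tfrac12 + \delta/2] \subset [0,1]$, has Lipschitz constant at most $1-\tau \leq \tau$, and $f$ is a continuous injection, so the hypothesis of the lemma holds. But for every $\psi_0 \in \tilde L$ and the point $x = \psi_0 \in X$ one gets
\[
g(\psi_0)(t) = \max\bigl(0,\min\bigl(1, f(\psi_0)(t) + \psi_0(t)\bigr)\bigr) = \tfrac12 \quad \text{for all } t,
\]
a constant function. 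Equivalently, $f(x)(0) = f(x)(a)$ for every $x$ and $\{f(x)(0) - f(x) : x \in X\} = \tilde L$, so $\mathcal B = \tilde L$ and $\tilde L \setminus \mathcal B = \emptyset$. Thus no choice of a single fixed $\psi$ can work; this is not a pathology to be ruled out but a situation the lemma must accommodate.

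The idea you are missing is that the perturbation must be allowed to depend on $x$. The paper's proof does this through a partition of unity $\{h_m\}$ subordinate to a fine open cover of $X$: on a chosen grid $A \subset [0,a]$ it sets $g(x)|_A = \sum_m h_m(x)\, u_m$, where the $u_m$ are vectors chosen (by a genericity/linear-independence argument, Lemma~\ref{lemma: linear algebra 1}) so that no affine combination $\sum_m \lambda_m u_m$ with $\sum \lambda_m = 1$, $\lambda_m \geq 0$ lies in $\mathbb{R}e$. This collapses the possibly infinite-dimensional $X$ onto a finite-dimensional simplex, and the constant functions form only a one-dimensional line inside a grid space of much higher dimension, so the simplex can be positioned to miss it. Your observation that clamping and the condition $\mathrm{Lip}(\psi) \leq 1-\tau$ keep $g$ in $L[0,a]$ is sound and reappears implicitly in the paper's estimate at $n=0$, but you will need to replace the single $\psi$ by an $x$-dependent family along these lines.
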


\begin{proof}
We take $0<b<c<a$ satisfying $b=a-c < \delta/4$.
We take an open covering $\{U_1,\dots,U_M\}$ of $X$ satisfying
\begin{equation} \label{eq: open cover}
   \forall 1\leq m\leq M: \quad  \diam f(U_m) < \min \left(\frac{\delta}{4}, \frac{(1-\tau)b}{2}\right).
\end{equation}
We take a point $p_m\in U_m$ for each $m$.
We choose a natural number $N$ satisfying
\[  N > M, \quad \Delta  \overset{\mathrm{def}}{=} \frac{c-b}{N-1} < \frac{\delta}{4}. \]
We divide the interval $[b,c]$ into $(N-1)$ intervals of length $\Delta$:
\[  b= a_1<a_2<\dots<a_{N}=c, \quad a_{n+1}-a_n = \Delta\quad (\forall 1\leq n\leq N-1). \]
Set $A= \{a_1,\dots, a_N\}$ and define a vector $e\in \mathbb{R}^A$ by
$e=(1,1,\dots, 1)$.
Notice that $f(p_m)|_{A}$ is an element of $[0,1]^{A}$.
Since $N>M$ we can choose $u_1,\dots, u_M\in [0,1]^A$ satisfying
\begin{enumerate}
   \item $|f(p_m)(a_n) - u_m(a_n)| < \min\left(\delta/4, (1-\tau)b/2\right)$ for all $1\leq m\leq M$ and $1\leq n\leq N$.
   \item $|u_m(a_{n+1})-u_m(a_n)| < \Delta$ for all $1\leq m\leq M$ and $1\leq n\leq N-1$.
   \item The $(M+1)$ vectors $e, u_1,\dots, u_M$ are linearly independent.
\end{enumerate}

Let $\{h_m\}_{m=1}^M$ be a partition of unity on $X$ satisfying $\supp \, h_m\subset U_m$ for all $m$.
For $x\in X$ we define a piecewise linear function $g(x):[0,a]\to [0,1]$ as follows.
(We set $a_0=0$ and $a_{N+1}=a$.)
\begin{itemize}
   \item $g(x)(0) = f(x)(0)$ and $g(x)(a)= f(x)(a)$.
   \item $g(x)(a_n) = \sum_{m=1}^M h_m(x) u_m(a_n)$ for $1\leq n\leq N$.
   \item We extend $g(x)$ linearly. Namely, for $t= (1-\lambda) a_n + \lambda a_{n+1}$ with $0\leq \lambda\leq 1$ and
           $0\leq n\leq N$ we set $g(x)(t) = (1-\lambda) g(a_n) + \lambda g(a_{n+1})$.
\end{itemize}

\begin{claim}
 $g(x)\in L[0,a]$ and $\norm{g(x)-f(x)}_\infty < \delta$.
\end{claim}

\begin{proof}

For proving $g(x)\in L[0,a]$ it is enough to show $|g(x)(a_{n+1})-g(x)(a_n)| \leq |a_{n+1}-a_n|$ for all $0\leq n\leq N$.
For $1\leq  n\leq N-1$, this is a direct consequence of the property (2) of $u_m$.
So we consider the case of $n=0$. (The case of $n=N$ is the same).
\begin{equation*}
  \begin{split}
   |g(x)(b) - f(x)(0)| \leq & \sum_{m=1}^M h_m(x) |u_m(b)-f(p_m)(b)| + \sum_{m=1}^M h_m(x) |f(p_m)(b) - f(x)(b)| \\
                                  & + |f(x)(b)-f(x)(0)|.
  \end{split}
\end{equation*}
We apply to each term of the right-hand side
the property (1) of $u_m$, $\diam f(U_m) < (1-\tau)b/2$ in (\ref{eq: open cover}) and
$|f(x)(b)-f(x)(0)| \leq \tau b$ in (\ref{eq: tau-Lipschitz})
respectively. Then this is bounded by
\[  \frac{(1-\tau)b}{2} + \frac{(1-\tau)b}{2} + \tau b = b. \]
This proves $g(x)\in L[0,a]$.

Next we show $|g(x)(a_n) - f(x)(a_n)| < \delta/2$ for all $0\leq n\leq N+1$.
For $n=0, N+1$, this is trivial. For $1\leq n\leq N$, we can bound $|g(x)(a_n)-f(x)(a_n)|$ from above by
\begin{equation*}
   \begin{split}
    & \sum_{m=1}^M h_m(x) |u_m(a_n)-f(p_m)(a_n)| + \sum_{m=1}^M h_m(x) |f(p_m)(a_n)-f(x)(a_n)|  \\
    &<  \frac{\delta}{4}+ \frac{\delta}{4} = \frac{\delta}{2} \quad
      \left(\text{by the property (1) of $u_m$ and $\diam f(U_m) < \frac{\delta}{4}$ in (\ref{eq: open cover})}\right).
   \end{split}
\end{equation*}
Finally, let $a_n<t<a_{n+1}$. We can bound $|g(x)(t)-f(x)(t)|$ by
\begin{equation*}
   \begin{split}
     & |g(x)(t)-g(x)(a_n)| + |g(x)(a_n)-f(x)(a_n)| + |f(x)(a_n)-f(x)(t)|  \\
     & <  2(a_{n+1}-a_n) + \frac{\delta}{2}  \quad (\text{by $f(x), g(x)\in L[0,a]$}) \\
     & < \delta \quad  \left(\text{by $a_{n+1}-a_n \leq \max(b, \Delta) < \frac{\delta}{4}$}\right).
   \end{split}
\end{equation*}
\end{proof}

For every $x\in X$, the function $g(x):[0,a]\to [0,1]$ is a non-constant function because
\[ g(x)|_{\Lambda} = \sum_{m=1}^M h_m(x) u_m \not\in \mathbb{R} e \quad (\text{by the property (3) of $u_m$}). \]
This proves the statement.
\end{proof}

We need two lemmas on linear algebra.
For $u=(x_1,\dots, x_{n+1})\in \mathbb{R}^{n+1}$ we set
\[ Du = (x_2-x_1,x_3-x_2,\dots, x_{n+1}-x_{n}) \in \mathbb{R}^{n}. \]

\begin{lemma}  \label{lemma: linear algebra 1}
Let $l\geq m+1$ and set $e=(\underbrace{1,1,\dots, 1}_l) \in \mathbb{R}^{l}$.
The set of $(u_1,\dots, u_m) \in \mathbb{R}^{l+1}\times \dots \times \mathbb{R}^{l+1} = \left(\mathbb{R}^{l+1}\right)^m$ such that
\begin{equation} \label{eq: e and Du}
    \text{the vectors $e, Du_1, Du_2, \dots, Du_{m}$ are linearly independent}
\end{equation}
is open and dense in $\left(\mathbb{R}^{l+1}\right)^{m}$.
\end{lemma}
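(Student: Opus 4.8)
The plan is to handle the two assertions, openness and density, separately, in both cases exploiting that the defining condition is polynomial in the entries of $u_1,\dots,u_m$. The first step is to rewrite \eqref{eq: e and Du} as a rank condition. Since $D$ maps $\mathbb{R}^{l+1}$ into $\mathbb{R}^l$ and $e\in\mathbb{R}^l$, the vectors $e,Du_1,\dots,Du_m$ are the columns of an $l\times(m+1)$ matrix $M(u_1,\dots,u_m)$, and condition \eqref{eq: e and Du} says precisely that $M$ has rank $m+1$, equivalently that at least one of its $(m+1)\times(m+1)$ minors is nonzero. Note that this makes sense exactly because $l\geq m+1$.

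For openness: the assignment $(u_1,\dots,u_m)\mapsto M(u_1,\dots,u_m)$ is linear, as $D$ is linear, so every $(m+1)\times(m+1)$ minor of $M$ is a polynomial, hence a continuous function, of $(u_1,\dots,u_m)$. The locus where at least one of these finitely many continuous functions is nonzero is a finite union of open sets, hence open. (Equivalently, matrix rank is lower semicontinuous.)

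For density: the complement of the set in question is the locus $V$ where $e,Du_1,\dots,Du_m$ are linearly \emph{dependent}, i.e.\ the common zero set of all the $(m+1)\times(m+1)$ minors of $M$. This is a real algebraic subset of $\left(\mathbb{R}^{l+1}\right)^m\cong\mathbb{R}^{m(l+1)}$; if it had nonempty interior, each of those minor polynomials would vanish on a nonempty open set and hence be identically zero, forcing $V=\mathbb{R}^{m(l+1)}$. So it suffices to exhibit a single $(u_1,\dots,u_m)\notin V$. Here I use that $D:\mathbb{R}^{l+1}\to\mathbb{R}^l$ is surjective: its kernel consists of the constant vectors, so it is one-dimensional, and by rank--nullity the image is all of $\mathbb{R}^l$. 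Since $l\geq m+1$, I can pick $v_1,\dots,v_m\in\mathbb{R}^l$ with $e,v_1,\dots,v_m$ linearly independent --- for instance the first $m$ standard basis vectors, which together with $e$ are independent because $e$ has a nonzero $(m+1)$-st coordinate. Choosing any $u_i$ with $Du_i=v_i$ then produces the desired point. Hence $V$ is a proper algebraic subset, so it has empty interior and its complement is dense.

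The argument is routine and I do not anticipate a genuine obstacle; the only points requiring a little care are the initial bookkeeping --- keeping track that $Du_i,e\in\mathbb{R}^l$, so that an $(m+1)$-fold independence statement is meaningful precisely under the hypothesis $l\geq m+1$ --- and the verification that $D$ is onto, which is what allows an arbitrary target configuration $v_1,\dots,v_m$ to be realized as $Du_1,\dots,Du_m$.
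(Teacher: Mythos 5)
Your proposal is correct and follows essentially the same route as the paper: express the independence condition as the nonvanishing of some $(m+1)\times(m+1)$ minor (a polynomial condition, hence Zariski open), and then exhibit a single witness $(u_1,\dots,u_m)$ with $Du_i$ the standard basis vectors, so that the Zariski open set is nonempty and therefore dense. The paper states the "nonempty Zariski open implies dense" fact without proof and writes down the explicit $u_i$ (namely $u_i=(-1,\dots,-1,0,\dots,0)$ with $i$ entries equal to $-1$), whereas you spell out the dense-implication via vanishing of the minor polynomials and reach the same $Du_i=e_i$ by invoking surjectivity of $D$; these are only expository differences.
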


\begin{proof}
The condition (\ref{eq: e and Du}) defines a Zariski open set in $\left(\mathbb{R}^{l+1}\right)^{m}$.
So it is enough to show that the set is non-empty because a non-empty Zariski open set is always dense in the Euclidean topology.
We set
\[  u_i = (\underbrace{-1, \dots, -1}_{i}, \, \underbrace{0, \dots, 0}_{l+1-i}) , \quad (1\leq i\leq m). \]
Then
\[ Du_i = (\underbrace{0, \dots, 0}_{i-1},\, 1, \, \underbrace{0, \dots, 0}_{l-i}). \]
The vectors $e, Du_1,\dots Du_{m}$ are linearly independent.
\end{proof}

\begin{lemma}  \label{lemma: linear algebra 2}
Let $n > l \geq 2m$.
The set of $(u_1,\dots, u_m) \in \mathbb{R}^n\times \dots \times \mathbb{R}^n = \left(\mathbb{R}^n\right)^m$ such that,
for any integer $\alpha$ with $2\leq \alpha\leq n-l+1$,
 \begin{equation}  \label{eq: pick up linear independence}
    \text{$u_1|_1^l ,\> u_1|_{\alpha}^{\alpha+l-1} ,\> u_2|_1^l, \> u_2|_{\alpha}^{\alpha+l-1}, \dots, u_m|_{1}^l, \> u_m|_{\alpha}^{\alpha+l-1}$
           are linearly independent in $\mathbb{R}^l$}
 \end{equation}
is open and dense in $\left(\mathbb{R}^n\right)^m$.
Here for $u_i = (x_{i1}, \dots, x_{i n})$
\[ u_i|_{1}^l = (x_{i 1},\dots, x_{i l}), \quad
   u_i|_\alpha^{\alpha+l-1} =  (x_{i, \alpha},\dots, x_{i, \alpha+ l-1}). \]
\end{lemma}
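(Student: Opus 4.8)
The plan is to prove that the set in the statement is a \emph{nonempty} Zariski-open subset of $\left(\mathbb{R}^n\right)^m=\mathbb{R}^{nm}$; openness and density in the Euclidean topology are then automatic, exactly as in the proof of Lemma \ref{lemma: linear algebra 1}. For a fixed integer $\alpha$ with $2\le\alpha\le n-l+1$, condition (\ref{eq: pick up linear independence}) --- which can hold since $2m\le l$ --- is equivalent to the non-vanishing of at least one of the finitely many $2m\times 2m$ minors of the $2m\times l$ matrix whose rows are the vectors in (\ref{eq: pick up linear independence}); hence it cuts out a Zariski-open set $\Omega_\alpha\subseteq\mathbb{R}^{nm}$. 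The set in the statement is the finite intersection $\bigcap_\alpha\Omega_\alpha$, so it is again Zariski-open, hence Euclidean-open; and once each $\Omega_\alpha$ is known to be nonempty, the complement of $\bigcap_\alpha\Omega_\alpha$ is a finite union of proper Zariski-closed subsets of the irreducible space $\mathbb{R}^{nm}$, hence lies in the zero locus of a single nonzero polynomial and so has empty Euclidean interior. Therefore it suffices to prove $\Omega_\alpha\ne\emptyset$ for each $\alpha$ separately.

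Fix $\alpha$ and write $s=\alpha-1$, so $1\le s\le n-l$ and $l+s\le n$. Let $\epsilon_1,\dots,\epsilon_n$ be the standard basis of $\mathbb{R}^n$ and take the explicit candidate
\[ u_i \;=\; \epsilon_i + \epsilon_{\,l+s+1-i} \qquad (1\le i\le m). \]
It then remains to check that the $2m$ vectors $u_i|_1^l$ and $u_i|_{s+1}^{s+l}$ ($1\le i\le m$) are linearly independent in $\mathbb{R}^l$ (the ordering is irrelevant, so one may group the first $m$ together and the last $m$ together). To do this I would form the $2m\times l$ matrix $M$ with these rows and note that its $j$-th column ($1\le j\le l$) is $\binom{c_j}{c_{s+j}}$, where $c_k\in\mathbb{R}^m$ is the $k$-th column of the $m\times n$ matrix with rows $u_1,\dots,u_m$. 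By the choice of the $u_i$, and because $2m\le l$ forces the two index ranges below to be disjoint, $c_k=\epsilon^{(m)}_k$ if $1\le k\le m$, $c_k=\epsilon^{(m)}_{l+s+1-k}$ if $l+s+1-m\le k\le l+s$, and $c_k=0$ otherwise (here $\epsilon^{(m)}_\bullet$ denotes the standard basis of $\mathbb{R}^m$). Then I would restrict $M$ to the $2m$ columns indexed by $\{1,\dots,m\}\cup\{l-m+1,\dots,l\}$ --- these are $2m$ distinct indices since $2m\le l$ --- and verify, using the description of the $c_k$, that after an appropriate reordering of the selected columns the resulting $2m\times 2m$ submatrix is
\[ \begin{pmatrix} I_m & P \\ P & I_m \end{pmatrix}, \qquad (P)_{ij}=\begin{cases} 1,& i=s+j\le m,\\ 0,& \text{otherwise.}\end{cases} \]
Because $s\ge 1$, every nonzero entry of $P$ lies strictly below the diagonal, so $I_m-P^2$ is lower triangular with $1$'s on the diagonal; by the Schur complement formula the determinant of this block matrix equals $\det(I_m-P^2)=1\ne 0$. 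Hence $M$ has rank $2m$, which is precisely the asserted linear independence, so $\Omega_\alpha\ne\emptyset$.

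The genuine content lies entirely in this last step: exhibiting a single family $(u_1,\dots,u_m)$ for which the two overlapping length-$l$ windows of each $u_i$ produce $2m$ independent vectors. The difficulty is that the overlap between the windows $[1,l]$ and $[\alpha,\alpha+l-1]$ varies drastically with $\alpha$ (the windows can even be disjoint, when $\alpha>l$), and crude guesses --- e.g. taking each $u_i$ to be a single standard basis vector --- break down for small $\alpha$. The point of the choice $u_i=\epsilon_i+\epsilon_{l+s+1-i}$ is that, after selecting the right $2m$ columns, the obstruction collapses to the nilpotence of a shift matrix. I expect the combinatorial bookkeeping needed to confirm that the chosen submatrix has the displayed block form --- tracking which of the indices $j$, $s+j$, $l+s+1-j$ fall into the ranges $[1,m]$ and $[l+s+1-m,\,l+s]$ --- to be the most delicate part, and the likeliest place for an off-by-one error, but it is elementary.
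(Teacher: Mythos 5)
Your proof is correct and takes essentially the same approach as the paper: both reduce to showing each $\Omega_\alpha$ is a nonempty Zariski-open set and exhibit the identical witness $u_i=\epsilon_i+\epsilon_{\alpha+l-i}$ (your $\epsilon_i+\epsilon_{l+s+1-i}$ with $s=\alpha-1$). The only difference is that the paper declares the rank computation ``direct to check'' while you carry it out explicitly via the $\begin{pmatrix} I_m & P \\ P & I_m \end{pmatrix}$ block form and the nilpotence of the shift matrix $P$.
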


\begin{proof}
The condition (\ref{eq: pick up linear independence}) defines a Zariski open set in $\left(\mathbb{R}^n\right)^m$.
Hence it is enough to show that for
each $2 \leq \alpha \leq n-l+1$ we can choose $(u_1,\dots, u_m) \in  \left(\mathbb{R}^n\right)^m$
satisfying (\ref{eq: pick up linear independence}).

We define $u_i = (x_{i1}, \dots, x_{i n})$ $(1 \leq i\leq m)$ by
\[ x_{ij} = 1 \quad \left(j= i, \alpha + l-i \right), \quad x_{ij} = 0 \quad (\text{otherwise}). \]
Then it is direct to check that these $u_i$ satisfy (\ref{eq: pick up linear independence}).
One can also use a proof from \cite[Lemma 5.5]{Lindenstrauss}.
\end{proof}

\begin{lemma} \label{key lemma}
Let $f\in C\left(X, L[0,a]\right)$ and suppose there exists $0<\tau<1$ satisfying (\ref{eq: tau-Lipschitz}).
Then for any $\delta>0$ there exists $g\in C\left(X, L[0,a]\right)$ satisfying
    \begin{enumerate}
       \item $\max_{x\in X} \norm{f(x)-g(x)}_\infty < \delta$.
       \item $g(x)(0)=f(x)(0)$ and $g(x)(a)=f(x)(a)$ for all $x\in X$.
       \item If $x,y\in X$ and $0\leq \varepsilon  \leq a/2$ satisfy
               \[  \forall t\in [0,a-\varepsilon]:  g(x)(t+\varepsilon) = g(y)(t) \]
               then $\varepsilon=0$ and $d(x,y) < \delta$.
    \end{enumerate}
\end{lemma}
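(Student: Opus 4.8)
The plan is to combine Lemma \ref{lemma: avoiding constant functions} (which handles the case $\varepsilon=0$, $x=y$ — or rather, forces $g(X)$ to avoid the locus of constant functions) with a finite-dimensional perturbation that forbids the remaining ``shift coincidences'' for $\varepsilon>0$ and the ``distant coincidences'' for $\varepsilon=0$, $d(x,y)\geq\delta$. Just as in Lemma \ref{lemma: avoiding constant functions}, I would sample $g(x)$ at a fine arithmetic progression $A=\{a_1,\dots,a_N\}$ of mesh $\Delta$ inside a slightly shrunk subinterval $[b,c]\subset[0,a]$, set $g(x)(a_n)=\sum_{m=1}^M h_m(x)u_m(a_n)$ for a partition of unity $\{h_m\}$ subordinate to a fine open cover $\{U_m\}$ with $\diam f(U_m)$ small, pin the endpoints $g(x)(0)=f(x)(0)$, $g(x)(a)=f(x)(a)$, and interpolate linearly. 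The Lipschitz estimate and the $C^0$-estimate (1),(2) then go through verbatim as in the Claim inside Lemma \ref{lemma: avoiding constant functions}, using $\diam f(U_m)<(1-\tau)b/2$ and $|f(x)(b)-f(x)(0)|\leq\tau b$; so the only genuinely new work is arranging property (3) by a suitable generic choice of the vectors $u_1,\dots,u_M\in[0,1]^A$.

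Here is how (3) should follow. Suppose $g(x)(t+\varepsilon)=g(y)(t)$ for all $t\in[0,a-\varepsilon]$. Restricting to the sample points, this is a family of linear equations in $\sum h_m(x)u_m$ and $\sum h_m(y)u_m$. Write $\varepsilon$ in terms of the mesh: since $g$ is piecewise linear with breakpoints in $\{0\}\cup A\cup\{a\}$, a shift by $\varepsilon$ that matches $g(x)$ to $g(y)$ on a whole interval must in fact be (up to an error absorbed by choosing $\Delta$ much smaller than $\delta$) an integer multiple of $\Delta$, say $\varepsilon=\beta\Delta$ with $\beta\geq 0$ an integer and $\beta\leq N-l$ for the appropriate window length $l$; and one arranges $N$ large enough that the relevant windows of $A$ have length $\geq l\geq 2M$. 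If $\beta\geq 1$, then evaluating the coincidence on two overlapping windows of $A$ forces the vectors $\bigl(\sum_m h_m(x)u_m\bigr)|_1^l$, $\bigl(\sum_m h_m(x)u_m\bigr)|_{1+\beta}^{l+\beta}$ (and the $y$-analogues) to satisfy a nontrivial linear relation; by Lemma \ref{lemma: linear algebra 2} (applied with $n=N$, the window length $l$, and $m=M$), a generic choice of $u_1,\dots,u_M$ makes the $2M$ vectors $u_i|_1^l$, $u_i|_{1+\beta}^{l+\beta}$ linearly independent for every admissible $\beta$, which rules out any such relation unless all $h_m(x)=h_m(y)=0$ — impossible since $\{h_m\}$ is a partition of unity. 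Hence $\varepsilon=0$. Once $\varepsilon=0$ we have $g(x)=g(y)$, so $\sum_m h_m(x)u_m=\sum_m h_m(y)u_m$ on $A$; since $\diam f(U_m)$ was chosen $<\delta/4$ and the $u_m$ are $\delta/4$-close to $f(p_m)|_A$, if $d(x,y)\geq\delta$ then $x$ and $y$ lie in disjoint families of the $U_m$'s (after refining the cover so that $\mbox{mesh}<\delta$... ) and one extracts a contradiction with the linear independence of $e,Du_1,\dots,Du_M$ from Lemma \ref{lemma: linear algebra 1} — more precisely, $D\bigl(\sum h_m(x)u_m\bigr)=D\bigl(\sum h_m(y)u_m\bigr)$ forces $\sum(h_m(x)-h_m(y))Du_m=0$, hence $h_m(x)=h_m(y)$ for all $m$, contradicting that some $U_m$ contains exactly one of $x,y$.

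The main obstacle — and the point requiring real care — is the discretization of the shift parameter $\varepsilon$: a priori $\varepsilon$ is an arbitrary real number in $[0,a/2]$, not a multiple of the mesh $\Delta$, so the coincidence equation $g(x)(t+\varepsilon)=g(y)(t)$ relates values of $g$ at points that are \emph{not} sample points. One must argue that, because $g$ is piecewise linear on a partition of mesh $\leq\max(b,\Delta)$ and its slopes are quantized by the finitely many differences $u_m(a_{n+1})-u_m(a_n)$, a coincidence on an interval of positive length propagates to a coincidence at the sample points with $\varepsilon$ replaced by the nearest multiple of $\Delta$; and then one must keep the resulting rounding error well below $\delta$. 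This is exactly the ``combinatorial flavor'' the introduction advertises: it is the step that forces the two-parameter independence of Lemma \ref{lemma: linear algebra 2} rather than the simpler Lemma \ref{lemma: linear algebra 1}, and it dictates the precise hierarchy of constants $b\ll\delta$, $\Delta\ll\delta$, $N\gg M$, $l\geq 2M$ among the choices made at the start of the proof. Everything else is a careful bookkeeping of the triangle-inequality estimates already rehearsed in Lemma \ref{lemma: avoiding constant functions}.
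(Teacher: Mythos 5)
Your overall template is the right one: sample on an arithmetic progression, interpolate linearly, perturb the sample values $u_1,\dots,u_M$ by a generic small amount, and read linear independence off the partition of unity. The $C^0$- and Lipschitz-estimates, the endpoint pinning, and the handling of $\varepsilon=0$ do indeed go through as in Lemma~\ref{lemma: avoiding constant functions}. But the mechanism you propose for the crucial step — reducing an arbitrary real shift $\varepsilon$ to a multiple of the mesh $\Delta$ — is not the one the paper uses, and I do not think it works as sketched.

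You propose to replace $\varepsilon$ by its nearest multiple $\varepsilon'=\beta\Delta$ and argue that the coincidence propagates approximately, ``absorbing the rounding error.'' The difficulty is quantitative: the $1$-Lipschitz bound only gives $|g(x)(t+\varepsilon)-g(x)(t+\varepsilon')|\le \Delta/2$, and to turn the resulting approximate relation
$\bigl\|\sum_m h_m(x)\,u_m|_{\varepsilon'+\Lambda}-\sum_m h_m(y)\,u_m|_\Lambda\bigr\|\lesssim\Delta$
into a contradiction with Lemma~\ref{lemma: linear algebra 2}, you would need a uniform \emph{lower} bound on the smallest singular value of the $2M$-vector system $\{u_i|_\Lambda,\,u_i|_{\varepsilon'+\Lambda}\}$. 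But the $u_m$'s are chosen generically \emph{after} $\Delta$ is fixed (indeed $\Delta=(c-b)/(N-1)$ is determined by $N$, and the $u_m$ live in $[0,1]^A$ with $|A|=N$), so there is no room to force that singular value to dominate $\Delta$. The argument is circular as stated.

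The paper avoids this entirely by exploiting an \emph{exact} dichotomy rather than a rounding. Since $\Lambda=A\cap[b,a/4]$ is an initial segment of the equally-spaced set $A$, for any $\varepsilon\in(0,a/2]$ either $\varepsilon+\Lambda\subset A$ (which forces $\varepsilon$ to be an integer multiple of $\Delta$), or $(\varepsilon+\Lambda)\cap A=\emptyset$. In the second case the shifted function $g(y)(t)=g(x)(t+\varepsilon)$ is differentiable at every point of $\Lambda$ — because $g(x)$ is piecewise linear with breakpoints confined to $\{0\}\cup A\cup\{a\}$ and the points $t+\varepsilon$ ($t\in\Lambda$) all miss that set and stay inside $(b,c)$ — so the consecutive differences of $g(y)$ along $\Lambda$ are constant, i.e.\ $\sum_m h_m(y)\,D_L u_m\in\mathbb{R}e$. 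This contradicts the choice of $u_m$ guaranteed by Lemma~\ref{lemma: linear algebra 1}. Only after this case is eliminated does one evaluate the coincidence at the sample points and invoke Lemma~\ref{lemma: linear algebra 2}. Note also that you misattribute the roles of the two linear-algebra lemmas: Lemma~\ref{lemma: linear algebra 1} is used precisely to kill the non-lattice shifts via the differentiability argument (not merely the $\varepsilon=0$, $d(x,y)\ge\delta$ case), while Lemma~\ref{lemma: linear algebra 2} finishes off the lattice shifts and forces $\varepsilon=0$ and $h_m(x)=h_m(y)$. You should replace your rounding heuristic with the dichotomy-plus-differentiability argument; once that is in place, the rest of your proposal matches the paper's proof.
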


\begin{proof}
Except for the use of the above two lemmas on linear algebra,
the proof is close to Lemma \ref{lemma: avoiding constant functions}.
We take $0<b<c<a$ with $b=a-c < \min\left(\delta/4, a/4\right)$.
We take an open covering $\{U_1,\dots,U_M\}$ satisfying $\diam\, U_m < \delta$ and
$\diam f(U_m) < \min\left(\delta/4, (1-\tau)b/2\right)$ for all $1\leq m\leq M$.
Take $p_m\in U_m$ for each $m$.
Let $N\geq 2$ be a natural number and set $\Delta = (c-b)/(N-1)$.
We introduce a partition $b=a_1<a_2<\dots<a_N=c$ by $a_n = b + (n-1)\Delta$.
We set $A= \{a_1,\dots, a_N\}$ and $\Lambda = A\cap [b,a/4] = \{a_1,\dots, a_L\}$.
We also set $e=(\underbrace{1,1,\dots, 1}_L)\in \mathbb{R}^{L}$.
We choose $N$ sufficiently large so that
\[  \Delta < \frac{\delta}{4}, \quad N> L \geq 2M. \]

Since $L\geq 2M\geq M+1$, by using Lemmas \ref{lemma: linear algebra 1} and \ref{lemma: linear algebra 2},
we can choose $u_1,\dots, u_M\in [0,1]^A$ satisfying
\begin{enumerate}
   \item $|f(p_m)(a_n) - u_m(a_n)| < \min\left(\delta/4, (1-\tau)b/2\right)$ for all $1\leq m\leq M$ and $1\leq n\leq N$.
   \item $|u_m(a_{n+1})-u_m(a_n)| < \Delta$ for all $1\leq m\leq M$ and $1\leq n\leq N-1$.
   \item Define $D_L u_m = (u_m(a_2)-u_m(a_1),\dots, u_m(a_{L+1})-u_m(a_L))\in \mathbb{R}^L$. Then the $(M+1)$ vectors
           $e, D_L u_1, \dots, D_L u_M$ in $\mathbb{R}^L$ are linearly independent.
   \item For any $\varepsilon > 0$ with $\varepsilon + \Lambda \subset A$,
    \begin{equation*}
    \text{$u_1|_{\Lambda},\> u_1|_{\varepsilon+\Lambda},\>
     u_2|_{\Lambda},\> u_2|_{\varepsilon+\Lambda}, \dots, u_m|_{\Lambda}, \> u_m|_{\varepsilon+\Lambda}$
           are linearly independent in $\mathbb{R}^\Lambda$}
    \end{equation*}
\end{enumerate}
For $x\in X$ we define $g(x):[0,a]\to [0,1]$ in the same way as in the proof of Lemma \ref{lemma: avoiding constant functions}.
Namely, we set $g(x)(0)= f(x)(0)$, $g(x)(a)= f(x)(a)$ and $g(x)(a_n) = \sum_{m=1}^M h_m(x) u_m(a_n)$ for $1\leq n\leq N$, where
$\{h_m\}$ is a partition of unity satisfying $\supp \, h_m\subset U_m$.
We extend $g(x)$ to $[0,a]$ by linearity.
It follows that $g(x)\in L[0,a]$ and $\norm{g(x)-f(x)}_\infty < \delta$ as before.
We need to check the property (3) of the statement.
Suppose there exist $x,y\in X$ and $0\leq \varepsilon \leq a/2$ satisfying $g(x)(t+\varepsilon) = g(y)(t)$ for all $0\leq t\leq a-\varepsilon$.

First we show $\varepsilon + \Lambda\subset A$.
Otherwise, $(\varepsilon +\Lambda) \cap A = \emptyset$.
Then it follows from the piecewise linearity that the function $g(y)(t)$ becomes differentiable at every $t\in \Lambda$, which implies
\[ g(y)(a_{n+1}) - g(y)(a_n) = g(y)(a_{n+2})-g(y)(a_{n+1})  \quad (1\leq n\leq L-1), \]
and hence
\[   \sum_{m=1}^M h_m(y) \left(u_m(a_{n+1}) - u_m(a_n)\right)  = \sum_{m=1}^M h_m(y)\left(u_m(a_{n+2})-u_m(a_{n+1})\right)
      \quad (1\leq n\leq L-1). \]
This means that $\sum_{m=1}^M h_m(y) D_L u_m \in \mathbb{R} e$, which contradicts the property (3) of $u_m$.
So we must have $\varepsilon +\Lambda \subset A$.

The equation $g(x)(t+\varepsilon) = g(y)(t)$ $(0\leq t\leq a-\varepsilon)$ implies
\[  \sum_{m=1}^M h_m(x) u_m|_{\varepsilon +\Lambda} = \sum_{m=1}^M h_m(y) u_m|_{\Lambda}.  \]
It follows from the property (4) of $u_m$ that $\varepsilon =0$ and $h_m(x)=h_m(y)$ for all $1\leq m\leq M$.
Then $x,y\in U_m$ for some $m$ and hence
$d(x,y) \leq \diam\, U_m < \delta$.
\end{proof}

\medskip

\section{Proof of Theorem \ref{theorem: Lipschitz Bebutov--Kakutani}}

Let $(X,T)$ be a flow. Set $F=\mathrm{Fix}(X,T)$.
We define $F_L = \mathrm{Fix}\left(L(\mathbb{R})\right)$.
Namely $F_L$ is the space of constant maps $\varphi:\mathbb{R}\to [0,1]$, which is
homeomorphic to $[0,1]$.
Suppose there exists a topological embedding $h:F\to F_L$.
We would like to show that there exists an equivariant embedding $f:X\to L(\mathbb{R})$ with $f|_F=h$.
We define $C_{T,h}\left(X,L(\mathbb{R})\right)$ as the space of equivariant continuous maps $f:X\to L(\mathbb{R})$
satisfying $f|_{F} = h$, which is endowed with the compact-open topology.
For $f\in C_{T,h}\left(X, L(\mathbb{R})\right)$ we define $\mathrm{Lip}(f)$ as the supremum of
\[  \frac{|f(x)(t)-f(x)(s)|}{|s-t|} \]
over all $x\in X$ and $s,t\in \mathbb{R}$ with $s\neq t$.

\begin{lemma} \label{lemma: nonempty}
The space $C_{T,h}\left(X, L(\mathbb{R})\right)$ is not empty.
Moreover for any $\delta>0$ there exists $f\in C_{T,h}\left(X, L(\mathbb{R})\right)$ satisfying
$\mathrm{Lip}(f)\leq \delta$.
\end{lemma}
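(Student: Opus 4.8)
The plan is to build $f$ directly from the topological embedding $h\colon F\to F_L$ by a standard partition-of-unity construction, taking care to interpolate $h$ on $F$ while keeping the global Lipschitz constant as small as we like. First I would fix $\delta>0$. Since $F=\mathrm{Fix}(X,T)$ is a closed subset of the compact metric space $X$ and $h$ is continuous, I would choose, using compactness and the uniform continuity of $h$, a small $\eta>0$ such that $d(x,x')<\eta$ with $x,x'\in F$ forces $\norm{h(x)-h(x')}_\infty$ to be small; I would also make $\eta$ small enough that the constant functions we build stay within $[0,1]$ after the perturbations below. The construction will produce an equivariant map $X\to C(\mathbb{R})$ whose values are genuinely one-Lipschitz (indeed $\delta$-Lipschitz), hence land in $L(\mathbb{R})$.

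The key idea is to use the flow to transport a fixed ``profile'' function along orbits. Concretely, I would pick a finite open cover $\{U_1,\dots,U_M\}$ of $X$ with $\diam U_m$ small, points $p_m\in U_m$, and a subordinate partition of unity $\{h_m\}$. For each $m$ choose a continuous function $\psi_m\colon\mathbb{R}\to[0,1]$ which is $\delta$-Lipschitz; then set, for $x\in X$,
\begin{equation*}
  f(x)(t) \;=\; \sum_{m=1}^M h_m(T_t x)\,\psi_m(t) \;+\; (\text{a correction term built from }h).
\end{equation*}
The first sum is automatically $\mathbb{R}$-equivariant if we define $f$ so that $f(x)(t)$ depends only on the point $T_t x$ together with $t$ in an equivariant way; the cleanest route is actually to set $f(x)(t)=G(T_t x)$ for a fixed continuous $G\colon X\to[0,1]$ and then check that equivariance is automatic and the Lipschitz bound reduces to controlling how fast $G$ can change along orbits. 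Since the orbit map $t\mapsto T_t x$ need not be Lipschitz, to get the $\delta$-Lipschitz estimate I would instead precompose with a time-averaging (mollification): replace $G(T_t x)$ by $\int_{\mathbb{R}}\rho_R(s)\,G(T_{t-s}x)\,ds$ with $\rho_R$ a smooth bump of width $R$; choosing $R$ large relative to $1/\delta$ and $\sup|G|$ drives the Lipschitz constant below $\delta$, while equivariance is preserved by translation-invariance of the convolution.

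On $F$ the orbit is trivial, so the mollified map restricted to $F$ is just $x\mapsto$ (the constant function with value $G(x)$); by choosing $G|_F$ to realize $h$ (identifying $F_L$ with $[0,1]$) we get $f|_F=h$ exactly, since convolving a constant-in-$t$ function changes nothing. The main obstacle is reconciling three competing demands on $G$: it must equivariantly ``see enough'' of $X$ to eventually be perturbed into an embedding (that refinement is handled later, not here), it must restrict to $h$ on $F$, and its mollification must be $\delta$-Lipschitz. The first demand is deferred, so in this lemma the real work is just the quantitative interplay between the mollification width $R$, the sup-norm of $G$, and $\delta$; I would carry out that estimate via $\bigl|\int\rho_R(s)(G(T_{t-s}x)-G(T_{t'-s}x))\,ds\bigr|\le \norm{G}_\infty\int|\rho_R(s)-\rho_R(s+t-t')|\,ds \le \norm{G}_\infty\,\norm{\rho_R'}_{L^1}\,|t-t'|$ and then observe $\norm{\rho_R'}_{L^1}=O(1/R)$, so $R>\norm{G}_\infty/\delta$ (up to a constant) suffices. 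Finally I would verify continuity of $x\mapsto f(x)$ in the compact-open topology, which is immediate from uniform continuity of $G$ on compacta and continuity of the flow, completing the proof that $C_{T,h}(X,L(\mathbb{R}))\neq\emptyset$ and contains elements of arbitrarily small Lipschitz constant.
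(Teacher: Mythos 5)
Your proposal is correct and takes essentially the same route as the paper: extend the function $F\ni x\mapsto h(x)(0)\in[0,1]$ to a continuous $G\colon X\to[0,1]$ (Tietze), set $f(x)(t)=\int\rho(t-s)\,G(T_s x)\,ds$ with a mollifier whose derivative has $L^1$-norm at most $\delta$, and use that convolving a constant gives a constant to see $f|_F=h$. The initial partition-of-unity detour is unnecessary, but the argument you actually settle on matches the paper's proof.
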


\begin{proof}
Consider the map
\[  F\ni x\to h(x)(0) \in [0,1]. \]
By the Tietze extension theorem, we can extend this function to
a continuous map $h_0:X\to [0,1]$.
Let $\varphi: \mathbb{R}\to [0,1]$ be a smooth function satisfying
\[  \int_{-\infty}^\infty \varphi(t)\, dt = 1, \quad \int_{-\infty}^\infty \varphi'(t) \, dt \leq \min\left(1, \delta\right). \]
For $x\in X$ we define $f(x): \mathbb{R} \to [0,1]$ by
\[  f(x)(t) = \int_{-\infty}^\infty \varphi(t-s) h_0(T_s x) \, ds. \]
Then $|f(x)'(t)|\leq \min(1,\delta)$ and $f=h$ on $F$. Hence $f\in C_{T,h}\left(X, L(\mathbb{R})\right)$ and $\mathrm{Lip}(f)\leq \delta$.
\end{proof}

We borrow the next lemma from Auslander \cite[p. 186, Corollary 6]{Auslander}.

\begin{lemma} \label{lemma: local section}
Let $p\in X\setminus F$.
There exist $a >0$ and a closed set $S\subset X$ containing $p$ such that the map
\begin{equation} \label{eq: local section}
  [-a, a] \times S\to X, \quad (t, x)\mapsto T_t x
\end{equation}
is a continuous injection whose image contains an open neighborhood of $p$ in $X$.
We call $(a, S)$ a \textbf{local section} around $p$ and denote
the image of (\ref{eq: local section}) by $[-a,a] \cdot S$.
\end{lemma}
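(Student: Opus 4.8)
The final statement to prove is Lemma~\ref{lemma: local section}, the existence of a local section around a non-fixed point $p\in X\setminus F$. Since the paper explicitly says this lemma is borrowed from Auslander, the intended ``proof'' is really a citation. But let me sketch how one would actually establish it, so that the argument is self-contained in spirit.

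\begin{proof}[Proof sketch]
Fix $p\in X\setminus F$, so there is some time $t_0$ with $T_{t_0}p\neq p$. The plan is to produce, by an averaging/smoothing construction, a real-valued continuous function on $X$ whose value along the orbit of $p$ is strictly monotone near $t=0$, and then use its level set to cut out the section. Concretely, first I would pick a continuous function $\psi:X\to\mathbb{R}$ with $\psi(p)\neq \psi(T_{t_0}p)$ (Urysohn), and define $F_\psi(x)=\int_{\mathbb{R}}\rho(s)\,\psi(T_s x)\,ds$ for a suitable smooth compactly supported kernel $\rho$; then $t\mapsto F_\psi(T_t x)$ is $C^1$ in $t$, uniformly in $x$, with derivative depending continuously on $x$. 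By choosing $\rho$ appropriately (e.g. a narrow bump, or a derivative of a bump, localized so as to detect the displacement $T_{t_0}p\neq p$) one arranges $\frac{d}{dt}F_\psi(T_t p)\big|_{t=0}\neq 0$; say it equals some $\kappa>0$.

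Next, by continuity of $(t,x)\mapsto \frac{d}{dt}F_\psi(T_t x)$, there is a small $a>0$ and an open neighborhood $V$ of $p$ such that $\frac{d}{dt}F_\psi(T_t x)\geq \kappa/2$ for all $x\in V$ and all $|t|\leq a$. Shrinking $a$ and $V$, assume also $T_t x\in$ (the domain where things are controlled) for $|t|\le a$, $x\in V$. Now set
\[
   S=\{\,x\in V : F_\psi(x)=F_\psi(p),\ \text{and } T_t x\notin V \text{ or } F_\psi(T_tx)\ne F_\psi(p)\ \text{for } 0<|t|\le a\,\},
\]
more simply: let $S$ be the connected component of $p$ in $\{x\in \overline{V'}: F_\psi(x)=F_\psi(p)\}$ for a slightly smaller neighborhood $V'$, intersected with a small closed ball. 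The monotonicity of $t\mapsto F_\psi(T_tx)$ on $[-a,a]$ guarantees that each short orbit arc $\{T_t x: |t|\le a\}$, $x\in S$, meets the level set $\{F_\psi=F_\psi(p)\}$ exactly once, which gives injectivity of $(t,x)\mapsto T_t x$ on $[-a,a]\times S$; continuity is automatic, and that the image contains a neighborhood of $p$ follows because near $p$ every point $y$ can be flowed (by the implicit function theorem applied to $t\mapsto F_\psi(T_t y)$) to the level set in a time $\tau(y)$ depending continuously on $y$, landing in $S$. Finally replace $S$ by its closure in $X$; the strict monotonicity estimate persists on a slightly smaller $a$, so $(a,S)$ is the desired local section.

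The main obstacle is the topological/measurability care needed in defining $S$: a priori the level set $\{F_\psi=F_\psi(p)\}\cap V$ can be complicated, and one must carve out a \emph{closed} piece $S\ni p$ that is simultaneously a genuine cross-section (hit once by each short orbit arc) and ``fat enough'' that flowing it covers a neighborhood of $p$. This is exactly where the flow structure is used: the uniform derivative bound $\frac{d}{dt}F_\psi(T_tx)\ge\kappa/2$ on $[-a,a]\times V$ converts the vague level set into a clean section, because it rules out an orbit arc being tangent to or multiply crossing the level. Everything else---continuity of the smoothed function, the implicit function theorem for the return time, passing to closures---is routine. Since this is a standard fact, we simply invoke \cite[p.~186, Corollary~6]{Auslander} in the body of the paper.
\end{proof}
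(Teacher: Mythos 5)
The paper actually gives a short self-contained proof (as a courtesy to the reader), not merely a citation, and its proof takes a route that differs from yours in one essential respect. Both proofs average a cut-off function along the flow, but the paper chooses the averaging so cleverly that the resulting $f(x)=\int_c^0 h(T_tx)\,dt$ satisfies the \emph{exact} cocycle-like identity $f(T_t x)=f(x)+t$ for $x$ in a closed neighborhood $A$ of $p$ and $|t|\le a$. This is arranged by making $h\equiv 1$ on $\bigcup_{|t|\le a}T_t(A)$ and $h\equiv 0$ on $\bigcup_{|t|\le a}T_{t+c}(A)$, so that the endpoint contributions to the integral are exactly $t$ and $0$. With this exactness in hand, $S=\{x\in A:\ f(x)=f(p)\}$ is obviously closed, injectivity is one line ($T_sx=T_ty\Rightarrow s+f(p)=t+f(p)\Rightarrow s=t$), and the image contains $\{x\in U: |f(x)-f(p)|<b\}$ because $T_{-(f(x)-f(p))}x$ lands in $S$ by direct computation.

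Your approach instead smooths a generic Urysohn function and only secures \emph{strict monotonicity} of $t\mapsto F_\psi(T_tx)$, not the exact slope-one identity; this is the ``generic implicit-function-theorem'' route to a local section. It is correct in spirit and can be made rigorous, but it is precisely because you only have monotonicity that you run into the carving-out problem you flag: you must restrict to connected components, shrink neighborhoods so the derivative bound persists along arcs of length $2a$ (not just $a$), and then argue the section is closed and ``fat enough.'' The paper's exact identity dissolves all of these issues at once, which is why its proof is so short. In other words, the extra care you identify as ``the main obstacle'' is not an unavoidable feature of the lemma; it is an artifact of using monotonicity where the Auslander/Kakutani construction manufactures an exact local time function. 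Your sketch is not wrong, but it reproves the lemma by a genuinely different (and somewhat heavier) method, and the ``routine'' steps you leave out are exactly the ones the paper's trick renders unnecessary.
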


\begin{proof}
We explain the proof for the convenience of readers.
We can find $c<0$ and a continuous function $h:X\to [0,1]$
satisfying $T_c p \not\in \supp\, h$ and $h=1$ on a neighborhood of $p$.
We define $f:X\to \mathbb{R}$ by
\[  f(x) = \int_c^0 h(T_t x) dt. \]
We choose $0<a<|c|$ and a closed neighborhood $A$ of $p$ satisfying
\[  \bigcup_{|t|\leq a} T_t(A) \subset \{h=1\}, \quad
      \bigcup_{|t|\leq a} T_{t+c}(A) \cap \supp\, h = \emptyset.  \]
It follows that $f\left(T_t x\right) = f(x) + t$ for any $x\in A$ and $|t|\leq a$.
Set $S= \{x\in A|\, f(x)=f(p)\}$.
Then $(a,S)$ becomes a local section.
Indeed if $x,y\in S$ and $s,t\in [-a,a]$ satisfy $T_s x = T_t y$, then
$s+f(p) = f(T_s x)= f(T_t y) = t+f(p)$ and hence $s=t$ and $x=y$.
Thus the map (\ref{eq: local section}) is injective.
We take $0<b<a$ and an open neighborhood $U$ of $p$ satisfying $\bigcup_{|t|<b} T_t(U) \subset A$.
Then the set $[-a,a]\cdot S$ contains
\begin{equation} \label{eq: open neighborhood of p in the local section lemma}
    \{x\in U|\, -b<f(x)-f(p)<b\}
\end{equation}
because if $x\in U$ satisfies $t \overset{\mathrm{def}}{=} f(x)-f(p) \in (-b,b)$ then
$f(T_{-t} x) = f(x)-t = f(p)$ (i.e. $T_{-t}x\in S$) and $x = T_t(T_{-t} x) \in [-a,a]\cdot S$.
The set (\ref{eq: open neighborhood of p in the local section lemma}) is an open neighborhood of $p$.
\end{proof}

\begin{lemma}  \label{lemma: separating F}
For any point $p\in X\setminus F$ there exists a closed neighborhood $A$ of $p$ in $X$ such that the set
\begin{equation} \label{eq: G(A)}
     G(A) = \left\{f\in C_{T,h}\left(X, L(\mathbb{R})\right) |\, f(A) \cap F_L = \emptyset \right\}
\end{equation}
is open and dense in the space $C_{T,h}\left(X, L(\mathbb{R})\right)$.
\end{lemma}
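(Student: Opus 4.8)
The plan is to establish openness and density of $G(A)$ separately. \emph{Openness} is immediate and holds for any closed $A$: since $A$ and $F_L\cong[0,1]$ are compact, for $f\in G(A)$ the compact sets $f(A)$ and $F_L$ are disjoint, hence at positive distance in $L(\mathbb{R})$, and because $X$ is compact the compact-open topology on $C_{T,h}(X,L(\mathbb{R}))$ is uniform convergence, so every $g$ close enough to $f$ still has $g(A)\cap F_L=\emptyset$. The real work is to choose the neighbourhood $A$ of $p$ so that $G(A)$ is dense.

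For the set-up I would take, via Lemma \ref{lemma: local section}, a local section $(4a,S)$ around $p$. Since $S$ is compact the injective map $(t,x)\mapsto T_t x$ is a homeomorphism of $[-4a,4a]\times S$ onto the closed set $[-4a,4a]\cdot S$, so every $y$ in this set has continuously varying ``section coordinates'' $x(y)\in S$, $t(y)\in[-4a,4a]$ with $y=T_{t(y)}x(y)$, and $[-a,a]\cdot S$ contains an open neighbourhood $V$ of $p$. A routine shrinking argument (using that $V$ is open, the section coordinates are continuous there, and the compact metric space $S$ is normal) produces $\varepsilon\in(0,a)$ and nested closed neighbourhoods $S_2\subseteq\mathrm{int}_S S_1\subseteq S_1\subseteq S$ of $p$ in $S$ such that the sub-flow-box $B'=[-\varepsilon/2,\varepsilon/2]\cdot S_1$ lies in $V$, is a neighbourhood of $p$, and has the key property that every $y\in\partial_X B'$ with $|t(y)|<\varepsilon/2$ has section point $x(y)\in\partial_S S_1$. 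Set $A=[-\varepsilon/2,\varepsilon/2]\cdot S_2$, a closed neighbourhood of $p$ inside $B'$; note $S\cap F=\emptyset$ by injectivity of the section, hence $B'\cap F=\emptyset$. To prove density, given $f_0$ and $\delta>0$ I first reduce to $\mathrm{Lip}(f_0)<1$: by Lemma \ref{lemma: nonempty} choose $f_1\in C_{T,h}(X,L(\mathbb{R}))$ with $\mathrm{Lip}(f_1)$ tiny; the pointwise convex combinations $(1-\lambda)f_0+\lambda f_1$ again lie in $C_{T,h}(X,L(\mathbb{R}))$ (convexity preserves equivariance, the target $[0,1]$, the one-Lipschitz condition on $\mathbb{R}$, and the value $h$ on $F$), have Lipschitz constant $<1$, and tend uniformly to $f_0$ as $\lambda\to0$.

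So let $\tau:=\mathrm{Lip}(f)<1$. The map $x\mapsto f(x)|_{[-\varepsilon/2,\varepsilon/2]}$ is an element of $C(S_1,L[0,\varepsilon])$ (after the obvious translation) satisfying (\ref{eq: tau-Lipschitz}), so Lemma \ref{lemma: avoiding constant functions} gives $\Phi_0$ with $\Phi_0(x)$ non-constant for every $x\in S_1$, with $\Phi_0(x)(\pm\varepsilon/2)=f(x)(\pm\varepsilon/2)$, and with $\norm{\Phi_0(x)-f(x)|_{[-\varepsilon/2,\varepsilon/2]}}_\infty$ arbitrarily small. Take $\gamma\in C(S_1,[0,1])$ equal to $1$ on $S_2$ and to $0$ near $\partial_S S_1$, and put $\Phi(x)=(1-\gamma(x))\,f(x)|_{[-\varepsilon/2,\varepsilon/2]}+\gamma(x)\,\Phi_0(x)$; since $\gamma(x)$ is a scalar this is a genuine convex combination, so $\Phi(x)$ is one-Lipschitz, still agrees with $f(x)$ at $\pm\varepsilon/2$, still closely approximates $f(x)|_{[-\varepsilon/2,\varepsilon/2]}$, equals $f(x)|_{[-\varepsilon/2,\varepsilon/2]}$ when $\gamma(x)=0$, and equals the non-constant $\Phi_0(x)$ for $x\in S_2$. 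Now let $\tilde g(x)\in L(\mathbb{R})$ be $\Phi(x)$ on $[-\varepsilon/2,\varepsilon/2]$ and $f(x)$ elsewhere (one-Lipschitz, the pieces matching at $\pm\varepsilon/2$, and continuous in $x$), define $G:X\to[0,1]$ by $G=\tilde g(x(\cdot))(t(\cdot))$ on $B'$ and $G=f(\cdot)(0)$ off $B'$, and set $g(y)(s)=G(T_sy)$. One checks: $G$ is well-defined and continuous (pasting lemma: at $y\in\partial_X B'$ either $|t(y)|=\varepsilon/2$, where $\Phi(x(y))$ meets $f(x(y))$ at an endpoint, or $|t(y)|<\varepsilon/2$, where the key property forces $\gamma(x(y))=0$ and hence $\tilde g(x(y))=f(x(y))$); $g$ is equivariant and $s\mapsto G(T_sy)$ is one-Lipschitz (on any orbit, $G$ is a time-translate of the one-Lipschitz $\tilde g(x)$ on the part lying in $B'$ and of the $\tau$-Lipschitz $f(\cdot)$ on the rest, fitting together one-Lipschitzly since $G$ is continuous and $\tilde g(x)$ agrees with $f(x)$ outside $[-\varepsilon/2,\varepsilon/2]$); $g|_F=h$ because $G$ differs from $y\mapsto f(y)(0)$ only on $B'\subseteq X\setminus F$; and $g$ is within $\delta$ of $f$ because $\sup_X|G-f(\cdot)(0)|\le\sup_{x\in S_1}\norm{\Phi(x)-f(x)|_{[-\varepsilon/2,\varepsilon/2]}}_\infty$. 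Finally $g\in G(A)$: for $y=T_{t_0}x_0\in A$ with $x_0\in S_2$ and $|t_0|\le\varepsilon/2$, one has $g(y)(s)=G(T_{t_0+s}x_0)=\Phi_0(x_0)(t_0+s)$ for all $s$ near $0$, which is non-constant, so $g(y)\notin F_L$.

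\emph{The main obstacle} is the equivariant extension of the perturbation: it must be spread from the local section over all of $X$ while remaining continuous across the (possibly irregular) boundary of the flow box \emph{and} one-Lipschitz along every orbit, even though the flow need not be Lipschitz. A cut-off in the time direction, the obvious way to fix continuity, would break the Lipschitz bound; the remedy is to cut off in the transversal direction (where it is a convex combination of functions and so is free in the Lipschitz estimate) and to shrink the section enough that the perturbation is supported in the interior of the flow box. The reduction to $\mathrm{Lip}(f)<1$ is needed only to make Lemma \ref{lemma: avoiding constant functions} applicable.
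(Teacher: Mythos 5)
Your proof is correct and follows essentially the same strategy as the paper: reduce to $\mathrm{Lip}(f)<1$ via a convex combination with a map of tiny Lipschitz constant from Lemma~\ref{lemma: nonempty}, perturb the restriction to a flow box using Lemma~\ref{lemma: avoiding constant functions}, damp the perturbation with a cutoff supported in the transversal direction of the section, and then spread it equivariantly along orbits. The only real difference is presentational: the paper encodes the equivariant spreading via the set of hitting times $H(x)$ and writes $g_1(x)(t)=f_1(x)(t)+q(T_sx)\,u(T_sx)(t-s)$ directly, whereas you package the same data into a continuous ``suspension'' function $G:X\to[0,1]$ and set $g(y)(s)=G(T_sy)$; your route makes the continuity of the perturbed map in $x$ more transparent (it reduces to continuity of $G$, checked by the pasting lemma across $\partial_X B'$ using the observation that a boundary point of the flow box with interior time coordinate must have transversal coordinate on $\partial_S S_1$), while the paper's hitting-time description is terser but requires the reader to see why the cutoff $q$ restores continuity. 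Both versions rely on the same key remark that a time-direction cutoff would ruin the one-Lipschitz bound, whereas the transversal cutoff is a convex combination of one-Lipschitz functions and hence harmless.
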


\begin{proof}
Take a local section $(a, S)$ around $p$.
For $x\in X$ we define $H(x)\subset \mathbb{R}$ (the set of \textbf{hitting times}) as the set of $t\in \mathbb{R}$
satisfying $T_t x\in S$. Any two distinct $s,t\in H(x)$ satisfy $|s-t|> a$.
Notice that if $x\in F$ then $H(x)=\emptyset$.
We denote by $\mathrm{Int}\left([-a,a]\cdot S\right)$ the interior of $[-a,a]\cdot S$.
We choose a closed neighborhood $A_0$ of $p$ in $S$ satisfying $A_0\subset \mathrm{Int}\left([-a,a]\cdot S\right)$.
We define a closed neighborhood $A$ of $p$ in $X$ by
\[    A = \bigcup_{|t|\leq a} T_t(A_0). \]
We choose a continuous function $q: S\to [0,1]$ satisfying $q=1$ on $A_0$ and $\supp \, q \subset \mathrm{Int}\left([-a,a]\cdot S\right)$.

The set $G(A)$ defined in (\ref{eq: G(A)}) is obviously open. So it is enough to prove that it is dense.
Take $f\in C_{T,h}\left(X, L(\mathbb{R})\right)$ and $0<\delta<1$.
By Lemma \ref{lemma: nonempty} we can find $f_0\in C_{T,h}\left(X, L(\mathbb{R})\right)$ satisfying $\mathrm{Lip}(f_0)\leq 1/2$.
We define $f_1\in C_{T,h}\left(X, L(\mathbb{R})\right)$ by
\[ f_1(x)(t) = (1-\delta) f(x)(t) + \delta f_0(x)(t). \]
It follows $\mathrm{Lip}(f_1) \leq 1-\delta/2 < 1$.
We apply Lemma \ref{lemma: avoiding constant functions} to the map
\[ X\ni x\mapsto f_1(x)|_{[0,a]} \in L[0,a]. \]
Then we find $g\in C\left(X, L[0,a]\right)$ satisfying
\begin{enumerate}
   \item $|g(x)(t)-f_1(x)(t)| < \delta$ for all $x\in X$ and $0\leq t\leq a$.
   \item $g(x)(0)= f_1(x)(0)$ and $g(x)(a)= f_1(x)(a)$ for all $x\in X$.
   \item $g(X) \cap F_L[0,a] = \emptyset$.
\end{enumerate}
We set $u(x)(t) = g(x)(t)-f_1(x)(t)$ for $x\in X$ and $0\leq t\leq a$.
We define $g_1\in C_{T,h}\left(X, L(\mathbb{R})\right)$ as follows: Let $x\in X$.
\begin{itemize}
   \item For each $s\in H(x)$, we set
   \[  g_1(x)(t) = f_1(x)(t) + q(T_s x)\cdot  u(T_s x)(t-s) \quad \text{for $t\in [s,s+a]$}.  \]
   \item For $t\in \mathbb{R}\setminus \bigcup_{s\in H(x)} [s,s+a]$, we set $g_1(x)(t) = f_1(x)(t)$.
\end{itemize}
This satisfies
\[  |g_1(x)(t)-f(x)(t)| \leq |g_1(x)(t)-f_1(x)(t)| + |f_1(x)(t)-f(x)(t)| \leq 3\delta \]
for all $x\in X$ and $t\in \mathbb{R}$.
If $x\in A$ then there exists $s\in [-a,a]$ with $T_s x\in A_0$ and hence
\[  g_1(x)(s+t) = g(T_s x)(t) \quad \text{for $t\in [0,a]$}. \]
It follows from the property (3) of $g$ that the function $g_1(x)$ is not constant.
Thus $g_1\in G(A)$.
Since $f$ and $\delta$ are arbitrary, this proves that $G(A)$ is dense in $C_{T,h}\left(X, L(\mathbb{R})\right)$.
\end{proof}

\begin{lemma} \label{lemma: separating two points}
For any two distinct points $p$ and $q$ in $X\setminus F$ there exist closed neighborhoods $B$ and $C$ of $p$ and $q$ in $X$
respectively such that
the set
\begin{equation} \label{eq: G(B,C)}
    G(B,C) = \left\{f\in C_{T,h}\left(X, L(\mathbb{R})\right) |\, f(B) \cap f(C) = \emptyset \right\}
\end{equation}
is open and dense in $C_{T,h}\left(X, L(\mathbb{R})\right)$.
\end{lemma}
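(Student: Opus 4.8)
The structure of the proof should parallel that of Lemma \ref{lemma: separating F}, but now with two local sections, using Lemma \ref{key lemma} instead of Lemma \ref{lemma: avoiding constant functions} to obtain a perturbation that separates orbits. First I would distinguish two cases depending on whether $p$ and $q$ lie on the same orbit or not.

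In the \emph{different orbit} case, I would take disjoint local sections $(a,S_p)$ around $p$ and $(a,S_q)$ around $q$ (shrinking $a$ so that the two local-section tubes $[-a,a]\cdot S_p$ and $[-a,a]\cdot S_q$ are disjoint, which is possible because $p,q$ have disjoint orbit closures --- wait, that need not hold; rather, it suffices that $p$ and $q$ are not on the same orbit, so after shrinking we can make the tubes disjoint using compactness and the local-section structure). On the tube around $p$ we graft, via a bump function $q_p$ as in Lemma \ref{lemma: separating F}, a perturbation $g_p$ produced by Lemma \ref{key lemma} applied to $x\mapsto f_1(x)|_{[0,a]}$; likewise we graft an \emph{independent} perturbation $g_q$ on the tube around $q$, chosen so that the two perturbed pieces cannot coincide --- concretely, one arranges the linear-algebra data so that the range of possible $g_p(x)|_{[0,a]}$ is disjoint from the range of possible $g_q(y)|_{[0,a]}$ (e.g. by forcing $g_p$-values into a sub-box and $g_q$-values into a complementary sub-box of $[0,1]^A$, or by an extra linear-independence condition across the two families). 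Then for $x\in B$ and $y\in C$ the equivariance forces $f(x)$ and $f(y)$ to restrict to these disjoint sets on a suitable window, so $f(x)\ne f(y)$.

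In the \emph{same orbit} case, say $q=T_{a_0}p$ for some $a_0\neq 0$: here $B$ and $C$ sit inside a single local section tube, and $f(x)=f(y)$ with $x\in B$, $y\in C$ would force an orbit relation of the form $g(x)(t+\varepsilon)=g(y)(t)$ with $\varepsilon$ close to $|a_0|$ --- but this is exactly what property (3) of Lemma \ref{key lemma} rules out, since that property only permits $\varepsilon=0$ and then $d(x,y)<\delta$, which we can make smaller than $d(p,q)/2$ so that $x\in B$ and $y\in C$ are incompatible. Openness of $G(B,C)$ is immediate from compactness of $B,C$ and continuity of $f$; density follows by the perturbation argument, starting (as in Lemma \ref{lemma: separating F}) from $f_1(x)(t)=(1-\delta)f(x)(t)+\delta f_0(x)(t)$ with $\mathrm{Lip}(f_1)<1$ so that Lemma \ref{key lemma} applies.

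**The main obstacle.** The delicate point is the bookkeeping in the different-orbit case: making the grafted perturbations on the two tubes genuinely incompatible while keeping each one an honest element of $L(\mathbb{R})$ and within $3\delta$ of $f$, and handling the possibility that an orbit of a point near $p$ may re-enter the tube around $q$ (so that hitting times for the two sections interleave). One resolves this by noting that the set of hitting times for $S_p$ and the set for $S_q$ are each $a$-separated, and by choosing the window on which we read off $f(x)$ to lie strictly inside a single grafting interval; then the restriction of $f(x)$ to that window is determined by a \emph{single} $g_p$-value (resp. $g_q$-value), reducing the problem to the disjointness of the two ranges arranged via the linear-algebra lemmas. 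I expect the same-orbit case to be comparatively routine given Lemma \ref{key lemma}.
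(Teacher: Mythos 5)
Your overall scaffolding (disjoint local sections, grafting a perturbation built from Lemma \ref{key lemma} via a bump function, as in Lemma \ref{lemma: separating F}) is on the right track, but you miss the single idea that makes the paper's proof work and end up proposing something that would not go through. The crucial point is that one applies Lemma \ref{key lemma} \emph{once}, to $x\mapsto f_1(x)|_{[0,a]}$, and grafts the \emph{same} resulting $g$ on \emph{both} tubes $[-a,a]\cdot S_1$ and $[-a,a]\cdot S_2$. Property (3) of Lemma \ref{key lemma} says: if $g(x')(t+\varepsilon)=g(y')(t)$ on $[0,a-\varepsilon]$ then $d(x',y')<\delta$. Taking $B=\bigcup_{|t|\le a/4}T_t(B_0)$, $C=\bigcup_{|t|\le a/4}T_t(C_0)$ so that any $x\in B$, $y\in C$ give $T_{s_1}x\in B_0$, $T_{s_2}y\in C_0$ with $|s_i|\le a/4$ and $\varepsilon=|s_2-s_1|\le a/2$, the equality $g_1(x)=g_1(y)$ forces $d(T_{s_1}x,T_{s_2}y)<\delta$; choosing $\delta<d(B_0,C_0)$ at the start yields the contradiction. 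No case distinction on whether $p,q$ share an orbit is needed (one can always shrink the sections to make the two tubes disjoint, since they shrink to $\{p\}$ and $\{q\}$), and the clause ``$\varepsilon=0$'' is not even used --- only ``$d<\delta$''.

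Your proposed ``different orbit'' branch, where you graft \emph{different} perturbations $g_p$ and $g_q$ and try to arrange ``the range of $g_p$ disjoint from the range of $g_q$,'' cannot work as stated: each perturbation is constrained to lie within $\delta$ of $f_1$, and nothing prevents $f_1(x)|_{[0,a]}$ and $f_1(y)|_{[0,a]}$ from coinciding for some $x$ near $p$ and $y$ near $q$, in which case the two $\delta$-balls overlap and no sub-box/complementary sub-box split is possible. Your fallback --- ``an extra linear-independence condition across the two families'' --- is in spirit what is needed, but the paper achieves it for free by reusing the same family $\{u_m\}$ on both tubes: the within-family shifted independence built into Lemma \ref{key lemma}(4) (via Lemma \ref{lemma: linear algebra 2}) is precisely the cross-comparison you want. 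Your ``same orbit'' branch is also a bit muddled (you place $B$ and $C$ ``inside a single tube,'' which fails if $q=T_{a_0}p$ with $|a_0|$ large), and in any case becomes unnecessary once the unified argument is seen. Finally, your worry about interleaving hitting times is resolved automatically: defining $H(x)$ as the hitting times of $S_1\cup S_2$ and using that the two tubes are disjoint gives that distinct hitting times are more than $a$ apart, so the grafting intervals $[s,s+a]$ never overlap.
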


\begin{proof}
Take local sections $(a, S_1)$ and $(a, S_2)$ around $p$ and $q$ respectively.
We can assume that $[-a,a]\cdot S_1$ and $[-a,a]\cdot S_2$ are disjoint with each other.
For $x\in X$ we define $H(x)$ as the set of $t\in \mathbb{R}$ satisfying $T_t x\in S_1\cup S_2$.
We choose closed neighborhoods $B_0$ of $p$ in $S_1$ and $C_0$ of $q$ in $S_2$ respectively
satisfying $B_0\subset \mathrm{Int}\left([-a,a]\cdot S_1\right)$ and $C_0\subset \mathrm{Int}\left([-a,a]\cdot S_2\right)$.
We take a continuous function $\tilde{q}:X\to [0,1]$ satisfying $\tilde{q}=1$ on $B_0\cup C_0$ and
$\supp\, \tilde{q}\subset \mathrm{Int}\left([-a,a]\cdot S_1\right)\cup \mathrm{Int}\left([-a,a]\cdot S_2\right)$.
We define closed neighborhoods $B$ and $C$ of $p$ and $q$ respectively by
\[ B = \bigcup_{|t|\leq a/4} T_t(B_0), \quad C = \bigcup_{|t|\leq a/4} T_t(C_0). \]

The set $G(B,C)$ defined in (\ref{eq: G(B,C)}) is open. We show that it is dense.
Take $f\in C_{T,h}\left(X, L(\mathbb{R})\right)$ and $0<\delta<1$.
We can assume that
\begin{equation} \label{eq: delta}
    \delta < d(B_0,C_0) \overset{\mathrm{def}}{=} \min_{x\in B_0, y\in C_0} d(x,y).
\end{equation}
We define $f_1\in C_{T,h}\left(X, L(\mathbb{R})\right)$ exactly in the same way as in the proof of Lemma \ref{lemma: separating F}.
It satisfies $\mathrm{Lip}(f_1) \leq 1-\delta/2$ and $|f(x)(t)-f_1(x)(t)| \leq 2\delta$ for all $x\in X$ and $t\in \mathbb{R}$.

We apply Lemma \ref{key lemma} to the map
\[  X\ni x\mapsto f_1(x)|_{[0,a]} \in L[0,a].  \]
Then we find $g\in C\left(X, L[0,a]\right)$ satisfying
    \begin{enumerate}
       \item $|g(x)(t)- f_1(x)(t)| < \delta$ for all $x\in X$ and $0\leq t \leq a$.
       \item $g(x)(0)=f_1(x)(0)$ and $g(x)(a)=f_1(x)(a)$ for all $x\in X$.
       \item If $x,y\in X$ and $0\leq \varepsilon \leq a/2$ satisfy
               \[  \forall t\in [0,a-\varepsilon]:  g(x)(t+\varepsilon) = g(y)(t) \]
               then $d(x,y) < \delta$.
    \end{enumerate}
We set $u(x)(t) = g(x)(t)-f_1(x)(t)$ for $x\in X$ and $0\leq t\leq a$.
We define $g_1\in C_{T,h}\left(X, L(\mathbb{R})\right)$ as before.
Namely, for $x\in X$,
\begin{itemize}
   \item For each $s\in H(x)$, we set
   \[  g_1(x)(t) = f_1(x)(t) + \tilde{q}(T_s x)\cdot  u(T_s x)(t-s) \quad \text{for $t\in [s,s+a]$}.  \]
   \item For $t\in \mathbb{R}\setminus \bigcup_{s\in H(x)} [s,s+a]$, we set $g_1(x)(t) = f_1(x)(t)$.
\end{itemize}
This satisfies $|g_1(x)(t)-f(x)(t)| \leq |g_1(x)(t)-f_1(x)(t)| + |f_1(x)(t)-f(x)(t)| \leq 3\delta$.

We would like to show $g_1(B)\cap g_1(C) = \emptyset$.
Suppose $x\in B$ and $y\in C$ satisfy $g_1(x)=g_1(y)$.
There exist $|s_1|\leq a/4$ and $|s_2|\leq a/4$ satisfying $T_{s_1} x\in B_0$ and $T_{s_2} y\in C_0$.
We can assume $s_1\leq s_2$ without loss of generality.
Set $\varepsilon=s_2-s_1 \in [0,a/2]$.
We have
\[  g_1(x)(s_1+t) = g(T_{s_1} x)(t) \text{ and } g_1(y)(s_2+t) = g(T_{s_2} y)(t) \quad
   \text{for $t\in [0,a]$}. \]
$g_1(x)=g_1(y)$ implies that
\[  g(T_{s_1} x) (t+\varepsilon) = g(T_{s_2} y) (t) \quad \text{for $t\in [0, a-\varepsilon]$}. \]
It follows from the property (3) of $g$ that $d(T_{s_1}x, T_{s_2}y) < \delta$.
Since $\delta < d(B_0, C_0) \leq d(T_{s_1}x, T_{s_2}y)$, this is a contradiction.
Therefore $g_1(B)\cap g_1(C) = \emptyset$.
This proves the lemma.
\end{proof}

Now we can prove Theorem \ref{theorem: Lipschitz Bebutov--Kakutani}.
By Lemmas \ref{lemma: separating F} and \ref{lemma: separating two points}, there exist families of closed sets
$\{A_n\}_{n=1}^\infty$, $\{B_n\}_{n=1}^\infty$ and $\{C_n\}_{n=1}^\infty$ of $X\setminus F$ such that
\begin{itemize}
   \item $X\setminus F = \bigcup_{n=1}^\infty A_n$ and
   $(X\setminus F)\times (X\setminus F) \setminus\{(x,x):x\in X\} = \bigcup_{n=1}^\infty B_n\times C_n$.
   \item $G(A_n)$ are open and dense in the space $C_{T,h}\left(X, L(\mathbb{R})\right)$ for all $n\geq 1$.
   \item $G(B_n,C_n)$ are open and dense in the space $C_{T,h}\left(X, L(\mathbb{R})\right)$ for all $n\geq 1$.
\end{itemize}
By the Baire category theorem, the set
\[  \bigcap_{n=1}^\infty G(A_n) \cap \bigcap_{n=1}^\infty G(B_n,C_n) \]
is dense and $G_\delta$ in $C_{T,h}\left(X, L(\mathbb{R})\right)$.
In particular it is not empty.
Any element $f$ in this set gives an embedding of the flow $(X,T)$ in $L(\mathbb{R})$.

\begin{remark}
The proof of the Bebutov--Kakutani theorem in \cite{Kakutani, Auslander} used the idea of ``constructing large derivative''.
It is possible to prove Theorem \ref{theorem: Lipschitz Bebutov--Kakutani} by adapting this idea to
the setting of one-Lipschitz functions.
But this approach seems a bit tricky and less flexible than the proof given above.
The above proof possibly has a wider applicability to different situations
(e.g. other function spaces).
\end{remark}

\medskip

\vspace{0.5cm}

\address{Yonatan Gutman \endgraf
Institute of Mathematics, Polish Academy of Sciences,
ul. \'{S}niadeckich~8, 00-656 Warszawa, Poland}

\textit{E-mail address}: \texttt{y.gutman@impan.pl}

\vspace{0.5cm}

\address{Lei Jin \endgraf
Institute of Mathematics, Polish Academy of Sciences, ul. \'{S}niadeckich 8, 00-656 Warszawa, Poland}

\textit{E-mail address}: \texttt{jinleim@mail.ustc.edu.cn}

\vspace{0.5cm}

\address{ Masaki Tsukamoto \endgraf
Department of Mathematics, Kyoto University, Kyoto 606-8502, Japan}

\textit{E-mail address}: \texttt{tukamoto@math.kyoto-u.ac.jp}

\textit{Current address}:
Einstein Institute of Mathematics, Hebrew University, Jerusalem 91904, Israel


\begin{thebibliography}{999999}





\bibitem[Aus88]{Auslander}
J. Auslander,
Minimal flows and their extensions,
North-Holland, Amsterdam, 1988.


\bibitem[Beb40]{Bebutov}
M.V. Bebutov,
On dynamical systems in the space of continuous functions,
Byull. Moskov. Gos. Univ. Mat. (1940) 2, no.5, 1-52.



\bibitem[GJ16]{GJ}
Y. Gutman and L. Jin,
An explicit compact universal space for real flows,
submitted, 2016.




\bibitem[Kak68]{Kakutani}
S. Kakutani,
A proof of Beboutov's theorem,
J. Differential equations
(1968) 4(2), 194-201.




\bibitem[Lin99]{Lindenstrauss}
E. Lindenstrauss,
Mean dimension, small entropy factors and an embedding theorem,
Publications Math\'ematiques de l'Institut des Hautes \'Etudes Scientifiques
1999, 89(1), 227-262.




\end{thebibliography}
\end{document}